\newcommand{\sysn}{\left\{\begin{array}{rcl}}
\newcommand{\sysk}{\end{array}\right.}
\newtheorem{theorem}{Theorem}[section]
\newtheorem{lemma}[theorem]{Lemma}
\newtheorem{proposition}[theorem]{Proposition}
\newtheorem{corollary}[theorem]{Corollary}
\theoremstyle{definition}
\newtheorem{definition}[theorem]{Definition}
\newtheorem{question}[theorem]{Question}
\newtheorem{example}[theorem]{Example}
\newtheorem{remark}[theorem]{Remark}
\journal{...}
\begin{document}

\title{On resolvability and tightness in uncountable spaces}

\author{Anton E. Lipin}

\address{Krasovskii Institute of Mathematics and Mechanics, \\ Ural Federal
 University, Yekaterinburg, Russia}

\ead{tony.lipin@yandex.ru}

\begin{abstract}

We investigate connections between resolvability and different forms of tightness.
This study is adjacent to \cite{BM,BH}.

We construct a non-regular refinement $\tau^*$ of the natural topology of the real line $\mathbb{R}$ with properties such that the space $(\mathbb{R}, \tau^*)$ has a hereditary nowhere dense tightness and it has no $\omega_1$-resolvable subspaces, whereas $\Delta(\mathbb{R}, \tau^*) = \frak{c}$.

We also show that the proof of the main result of \cite{BH}, being slightly modified, leads to the following strengthening: if $L$ is a Hausdorff space of countable character and the space $L^\omega$ is c.c.c., then every submaximal dense subspace of $L^\kappa$ has disjoint tightness.
As a corollary, for every $\kappa \geq \omega$ there is a Tychonoff submaximal space $X$ such that $|X|=\Delta(X)=\kappa$ and $X$ has disjoint tightness.
\end{abstract}

\begin{keyword} resolvability, tightness, submaximality

\MSC[2020]  54A25, 54B05, 54B10

\end{keyword}

\maketitle 


\section{Introduction}

\begin{definition}[\cite{Hewitt, Ceder}]
A space $X$ is called {\it $\kappa$-resolvable}, if $X$ contains $\kappa$ many pairwise disjoint dense subsets.
A space $X$ is called {\it maximally resolvable}, if $X$ is $\Delta(X)$-resolvable, where
$$\Delta(X) = \min\{|U| : U \text{ is nonempty open in } X\}$$
is the {\it dispersion character} of the space $X$.
A space $X$ is called {\it $\kappa$-irresolvable}, if $\kappa \leq \Delta(X)$ and $X$ is not $\kappa$-resolvable.
Besides that, 2-resolvable and 2-irresolvable spaces are also called {\it resolvable} and {\it irresolvable}, respectively.
\end{definition}

We refer the reader to a selective survey \cite{survey} for more details on resolvability.

Recall that {\it tightness} of a space $X$ is the smallest cardinal $\kappa$ such that for all $A \subseteq X$ and $x \in A'$ there is $D \subseteq A$ such that $|D|\leq \kappa$ and $x \in D'$.
Tightness of $X$ is denoted by $t(X)$.
E.G.~Pytkeev proved that every space $X$ such that $t(X)<\Delta(X)$ is maximally resolvable \cite{Pytkeev}.

In 1998 A.~Bella and V.I.~Malykhin investigated connections between resolvability and other forms of tightness in the class of countable spaces \cite{BM}.
The following notion is used in one of their positive results.

\begin{definition}
A space $X$ is said to have {\it nowhere dense tightness} (let us abbreviate it as {\it NDT}), if whenever $A \subseteq X$ and $x \in A'$ there is a set $N \subseteq A$ such that $N$ is nowhere dense in $X$ and $x \in N'$.
\end{definition}

\begin{theorem}[\cite{BM}]
Every countable crowded space $X$ with NDT is $\omega$-resolvable.
\end{theorem}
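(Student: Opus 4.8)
The plan is to build $\omega$ pairwise disjoint dense subsets directly, by assembling each of them from pairwise disjoint nowhere dense sets that accumulate at prescribed points. The engine of the whole argument is the following observation, which is exactly where crowdedness and NDT combine: \emph{if $F \subseteq X$ is nowhere dense and $x \in X$ is arbitrary, then there is a nowhere dense set $N \subseteq X \setminus F$ with $x \in N'$.} To see this, note that since $F$ is nowhere dense and $X$ has no isolated points, every open neighbourhood of $x$ contains a point of $X \setminus F$ distinct from $x$; hence $x \in (X \setminus F)'$. Applying NDT to the set $A = X \setminus F$ then yields a nowhere dense $N \subseteq A = X \setminus F$ with $x \in N'$, which is automatically disjoint from $F$.

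Next I would fix enumerations $X = \{x_k : k \in \omega\}$ and of $\omega \times \omega$ in order type $\omega$, and build along the latter a family $\{N^n_k : (k,n) \in \omega \times \omega\}$ of pairwise disjoint nowhere dense sets with $x_k \in (N^n_k)'$ for all $k,n$. At each finite stage the union $F$ of the sets already chosen is a \emph{finite} union of nowhere dense sets, hence nowhere dense, so the observation above applies to this $F$ and to the point $x_k$ dictated by the current cell $(k,n)$, producing the next set disjoint from everything chosen so far; this keeps the whole family pairwise disjoint. Setting $D_n = \bigcup_{k \in \omega} N^n_k$, disjointness of the cells gives $D_m \cap D_n = \emptyset$ for $m \neq n$, while each $D_n$ is dense because $x_k \in (N^n_k)' \subseteq \overline{D_n}$ for every $k$, so $\overline{D_n} = X$. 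Thus $\{D_n : n \in \omega\}$ witnesses $\omega$-resolvability.

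I expect the main difficulty to be the realization that one should force the witnessing sets to be \emph{pairwise disjoint}: this is what removes all coloring conflicts and allows the $D_n$ to be read off at once, rather than having to reconcile overlapping accumulating sets at different points. The second point to appreciate is why NDT, and not merely countable tightness, is the right hypothesis: countable tightness would supply a countable set accumulating at $x$, but not a nowhere dense one, and without nowhere-denseness the complement $X \setminus F$ could fail to accumulate at $x$, so the recursion would stall. Finally, countability of $X$ enters precisely to guarantee that at every stage the removed set $F$ is a finite union and therefore stays nowhere dense, which is what sustains the construction through all of its $\omega \times \omega$ steps.
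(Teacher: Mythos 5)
Your proof is correct. The paper itself states this theorem only as a citation of Bella--Malykhin and gives no proof, but your argument is complete and is essentially the standard one: the key observation (that for nowhere dense $F$ and any $x$ one gets, via crowdedness and NDT applied to $X\setminus F$, a nowhere dense $N\subseteq X\setminus F$ accumulating at $x$) is sound, the finite-stage unions stay nowhere dense, and the resulting sets $D_n$ are pairwise disjoint and dense.
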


However, the following question remains open.

\begin{question}
Is every crowded space with NDT resolvable?
\end{question}

There is a form of tightness which is stronger than NDT and has well studied connections with resolvability.
Recall that a set $D$ in a space $X$ is called {\it strongly discrete}, if there are pairwise disjoint open sets $U_x \ni x$ for all $x \in D$.

\begin{definition}
A space $X$ is said to have {\it strongly discrete tightness}, if whenever $A \subseteq X$ and $x \in A'$, there is a set $D \subseteq A$ such that $D$ is strongly discrete and $x \in D'$.
\end{definition}

Clearly, strongly discrete tightness entails NDT in the class of crowded spaces.

\begin{theorem}[\cite{JSSz_MN}]
Every crowded space with strongly discrete tightness is $\omega$-resolvable.
\end{theorem}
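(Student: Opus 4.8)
The plan is to reduce the statement to the construction of $\omega$ pairwise disjoint dense subsets of $X$, and to extract from strongly discrete tightness a usable operational form. Since $X$ is crowded, every nonempty open set $U$ is itself crowded, so each $x\in U$ satisfies $x\in(U\setminus\{x\})'$; applying strongly discrete tightness to $A=U\setminus\{x\}$ yields a strongly discrete $S\subseteq U$ with $x\in S'$, witnessed by pairwise disjoint open sets $U_s\ni s$ for $s\in S$. Two features of such an $S$ will drive the argument. First, $S$ is nowhere dense, so $\overline S$ has empty interior and every nonempty open $V$ meets the open dense set $X\setminus\overline S$; consequently, consuming $S$ at some stage never blocks any open set in the future. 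Second, and this is the point where the hypothesis is genuinely stronger than NDT, the pairwise disjoint neighbourhoods $U_s$ are themselves nonempty open crowded sets, so they furnish countably many \emph{independent open arenas} inside which the construction can be continued separately; a merely nowhere dense witness (as supplied by NDT) would not provide such independent room, which is consistent with the resolvability of NDT spaces being left open in the Question above.

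With this in hand I would build a colouring $c$ of a dense subset of $X$ with values in $\omega$ so that each colour class $c^{-1}(n)$ is dense, by transfinite recursion along a well-ordering of a $\pi$-base $\mathcal B$ of $X$. At the stage assigned to $B\in\mathcal B$ I would first shrink $B$ to a nonempty open $B'\subseteq B$ disjoint from the closures of the strongly discrete sets consumed at earlier stages, which is legitimate by the nowhere-density feature; then, inside $B'$, I would select a strongly discrete set $S_B$ as above and distribute its points among the $\omega$ colours using the disjoint arenas $U_s$, so that each colour receives points lying in $B'\subseteq B$. Ranging over the $\pi$-base then makes every colour meet every member of $\mathcal B$, hence dense, while the stagewise shrinking keeps the consumed point sets pairwise disjoint, so the colour classes $c^{-1}(n)$ are disjoint.

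The main obstacle is the bookkeeping that actually keeps this recursion running: to shrink each $B$ away from all earlier witnesses, the set of previously consumed points must remain small relative to $\Delta(X)$ at every stage, which forces one to control both the length of the recursion and the persistence of fresh strongly discrete sets inside ever-smaller open sets. The cleanest way to discharge this, and the one I would pursue, is to avoid the cardinal estimates by reflecting to the countable case already available in the excerpt: I would pass to a countable elementary submodel $M\prec H(\theta)$ containing $X$ and a base, check that crowdedness and strongly discrete tightness are inherited by the countable trace $X\cap M$, invoke the Bella--Malykhin theorem to $\omega$-resolve the trace, and transfer the resolution back to $X$ by elementarity. In either route the delicate step is identical, namely ensuring that the countably many density requirements for the $\omega$ colours are met simultaneously without the colour classes colliding, and this is exactly what the independent disjoint neighbourhoods of a strongly discrete witness are there to guarantee.
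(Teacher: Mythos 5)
This statement is quoted from \cite{JSSz_MN}; the paper under review gives no proof of it, so your attempt can only be judged on its own merits, and as written it does not go through. Your first route founders exactly where you say it does. To shrink each $B$ in the $\pi$-base away from the closures of \emph{all} previously consumed strongly discrete sets you need the union of those closures to remain nowhere dense (or at least not to swallow $B$), but a union of infinitely many nowhere dense sets need not be nowhere dense, and in a general crowded space the $\pi$-weight can vastly exceed $\Delta(X)$, so there is no cardinal bound that keeps the consumed set ``small relative to $\Delta(X)$''. Naming this as ``the main obstacle'' and then deferring it is not a proof: this bookkeeping \emph{is} the theorem. Note also that, as described, your construction never actually uses the disjoint arenas $U_s$ --- you only distribute the points of one strongly discrete set $S_B$ among the colours, which any countably infinite witness would allow; the real role of the $U_s$ in the Juh\'asz--Soukup--Szentmikl\'ossy argument is that strongly discrete tightness can be applied again \emph{inside each} $U_s$, producing a new strongly discrete set disjoint from and accumulating to the old one, and it is this iteration (not the one-step distribution) that produces $\omega$ many disjoint dense sets.

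Your fallback route via a countable elementary submodel is not sound. The trace $X\cap M$ with the subspace topology is typically not crowded (a countable subset of a crowded space is usually scattered or discrete in itself), strongly discrete tightness is not inherited by arbitrary subspaces, $X\cap M$ is generally not an element of $M$, and --- most importantly --- an $\omega$-resolution of a countable subspace is neither dense in $X$ nor transferable to one: there is no elementarity argument that converts ``some subspace of $X$ is $\omega$-resolvable'' into ``$X$ is $\omega$-resolvable'' (Proposition \ref{PCeder} would require a resolvable subspace inside \emph{every} nonempty open set, which this reflection does not provide). So both branches of the proposal leave the essential difficulty untouched, and the Bella--Malykhin theorem for countable NDT spaces cannot be invoked as a black box here.
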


\begin{theorem}[\cite{JSSz_MN}]
If $\kappa$ is a measurable cardinal, then there is a normal $\omega_1$-irresolvable space with strongly discrete tightness and dispersion character $\kappa$.
\end{theorem}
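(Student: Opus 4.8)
The plan is to realize the required space as a tree carrying an \emph{ultrafilter topology}. Since $\kappa$ is measurable, fix a $\kappa$-complete, nonprincipal, uniform ultrafilter $U$ on $\kappa$ (so every member of $U$ has cardinality $\kappa$). Take the underlying set $X=\kappa^{<\omega}$ of finite sequences, write $\mathrm{cone}(s)=\{t\in X:t\supseteq s\}$, and for $s\in X$ and any choice $F\colon \mathrm{cone}(s)\to U$ declare a basic neighbourhood
\[ V(s,F)=\{t\supseteq s:\ t(i)\in F(t\restriction i)\ \text{for all}\ |s|\le i<|t|\}. \]
First I would record the easy features. Every $V(s,F)$ contains the $\kappa$-many immediate successors $s^\frown\langle\alpha\rangle$ with $\alpha\in F(s)$, so $X$ is crowded and $\Delta(X)=|X|=\kappa\ge\omega_1$; each $V(s,F)$ is clopen (its complement is a union of cones), so $X$ is zero-dimensional and Hausdorff. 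Partitioning the level set $\omega$ into infinitely many infinite pieces $\{P_n\}$ and setting $D_n=\{t:|t|\in P_n\}$ gives $\omega$ pairwise disjoint dense sets, since $V(s,F)$ meets every level above $|s|$; hence $X$ is $\omega$-resolvable (as it must be, by the strongly discrete tightness established below together with the earlier theorem of \cite{JSSz_MN}).

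The conceptual heart is $\omega_1$-irresolvability, and this is exactly where measurability enters. Suppose for contradiction that $X=\bigsqcup_{\xi<\omega_1}D_\xi$ with each $D_\xi$ dense. I would run a fusion down from the root $\emptyset$: at a node $s$ reached by the construction, its immediate successors are coloured by the index $\xi$ for which $s^\frown\langle\alpha\rangle\in D_\xi$; since $U$ is $\kappa$-complete and $\omega_1<\kappa$, this partition of a $U$-set into $\omega_1$ classes has exactly one class in $U$, yielding a single colour $\eta_{|s|+1}$ and a shrunk $U$-set of admissible successors. Carrying this out level by level and using $\sigma$-completeness of $U$ to intersect the countably many resulting $U$-sets, I obtain a single function $F$ such that $V(\emptyset,F)$ is a nonempty basic open set all of whose level-$n$ nodes lie in one fixed $D_{\eta_n}$. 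Thus $V(\emptyset,F)$ meets only the countably many pieces $\{D_{\eta_n}:n<\omega\}$, so any $D_\mu$ with $\mu\notin\{\eta_n\}$ misses $V(\emptyset,F)$ and cannot be dense — a contradiction. This gives $\omega_1$-irresolvability.

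Strongly discrete tightness is then clean, provided one chooses the witnessing set correctly. Given $A\subseteq X$ and $s\in A'$, let $D$ be the set of $\subseteq$-minimal elements of $A\cap(\mathrm{cone}(s)\setminus\{s\})$. Being an antichain, $D$ sits in pairwise disjoint clopen cones, hence is strongly discrete. The point is that each $V(s,F)$ is closed under passing to initial segments that still extend $s$: if $t\in V(s,F)$ and $s\subsetneq t_0\subseteq t$ then $t_0\in V(s,F)$. So, given any neighbourhood $V(s,F)$, taking a point $t\in V(s,F)\cap(A\setminus\{s\})$ (which exists as $s\in A'$) and the first member of $A$ along the chain from $s$ to $t$ produces an element of $D\cap V(s,F)$. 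Hence $s\in\overline{D}$, as required. (Choosing minimal witnesses is essential: deep points of $A$ can be pruned away by $F$, while minimal ones cannot.)

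\textbf{Main obstacle.} The delicate part is \emph{normality}, and reconciling it with the two competing demands above: strongly discrete tightness wants neighbourhoods fat enough that convergence is robust, whereas $\omega_1$-irresolvability wants them ultrafilter-thin so that the homogenization fusion goes through. Zero-dimensionality and the Hausdorff property are immediate, but separating two disjoint closed sets by clopen sets is not, since the character of $X$ is large and $X$ is far from Lindelöf. I expect to attack this by a separation fusion that again exploits $\kappa$-completeness (building, for a given closed set, a clopen hull as a union of cones $V(s,F)$ assembled coherently along the tree), or, if that fails for the bare tree, by passing to a suitably chosen dense or closed subspace, or by replacing $U$ with a normal measure to gain the extra coherence. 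This is the step where the careful bookkeeping and the specific large-cardinal combinatorics do the real work; the remaining properties then follow from the paragraphs above.
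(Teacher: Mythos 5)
This theorem is imported from \cite{JSSz_MN}; the paper under review quotes it without proof, so I can only assess your construction on its own merits. The ultrafilter tree $\kappa^{<\omega}$ is the right kind of object, and your strongly discrete tightness argument (take the $\subseteq$-minimal points of $A$ above $s$, use that $V(s,F)$ is closed under initial segments extending $s$) is correct. The genuine gap is in the $\omega_1$-irresolvability fusion. Homogenizing the successors of a node $s$ produces a colour $c(s)$ that depends on the node, not on its level: $V(\emptyset,F)$ contains $\kappa$ many nodes at each level $n\ge 1$, one cluster of successors above each level-$(n-1)$ node, so the colours realized inside $V(\emptyset,F)$ form the set $\{c(s):s\in V(\emptyset,F)\}$, which can have cardinality $\omega_1$ (e.g.\ if the colour of $s^\frown\langle\alpha\rangle$ is a fixed surjective function of $\alpha=s(0)$ onto $\omega_1$). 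So the claim that all level-$n$ nodes of $V(\emptyset,F)$ lie in one $D_{\eta_n}$ is false, and no contradiction with density follows. The standard repair is a rank argument rather than a fusion: put $A^{(0)}=A$ and $A^{(n+1)}=\{s:\{\alpha<\kappa:s^\frown\langle\alpha\rangle\in A^{(n)}\}\in U\}$. By $\sigma$-completeness, $\overline{A}=\bigcup_{n<\omega}A^{(n)}$: if $s\notin A^{(n)}$ for every $n$, intersect at each node the countably many $U$-large sets of successors avoiding each $A^{(n)}$ to get $F$ with $V(s,F)\cap A=\emptyset$. An induction on $n$ shows that for pairwise disjoint sets at most one can satisfy $s\in A^{(n)}$ for a fixed node $s$ and fixed $n$ (two sets in $U$ meet). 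Hence at most countably many pairwise disjoint sets can have the root in their closure, which kills $\omega_1$ pairwise disjoint dense sets.

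The second issue is that you defer normality as the ``main obstacle,'' whereas for this space it is the easiest of the listed properties and follows from exactly the initial-segment-closure you already isolated. Given disjoint closed $H,K$, choose for each $s\in H$ a basic $V(s,F_s)$ missing $K$ and for each $t\in K$ a basic $V(t,G_t)$ missing $H$, and let $W_H,W_K$ be the respective unions. If some $u$ lay in $V(s,F_s)\cap V(t,G_t)$, then $s$ and $t$ would be comparable initial segments of $u$; if, say, $s\subsetneq t$, then $t\in V(s,F_s)\cap K$, a contradiction, and symmetrically otherwise. So $W_H\cap W_K=\emptyset$ and the space is normal (the same scheme, made canonical, gives monotone normality, which is the setting of \cite{JSSz_MN}). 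With the irresolvability argument replaced by the rank argument, your construction does yield the theorem.
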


\begin{theorem}[\cite{JM}]
The following conditions are equiconsistent:
\begin{enumerate}

\item[(1)] there is a measurable cardinal;

\item[(2)] there is a normal $\omega_1$-irresolvable space with strongly discrete tightness;

\item[(3)] there is a space with strongly discrete tightness which is not maximally resolvable.

\end{enumerate}
\end{theorem}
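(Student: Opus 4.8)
The plan is to establish the equiconsistency by proving a cycle of implications, of which two are outright ZFC implications and only one carries genuine consistency-strength content. First I would record that $(1)\Rightarrow(2)$ is exactly the preceding theorem of \cite{JSSz_MN}: a measurable cardinal $\kappa$ yields a normal $\omega_1$-irresolvable space with strongly discrete tightness and dispersion character $\kappa$. Next, $(2)\Rightarrow(3)$ is immediate in ZFC: if $X$ is $\omega_1$-irresolvable then $\omega_1\le\Delta(X)$ and $X$ is not $\omega_1$-resolvable, so $X$ cannot be $\Delta(X)$-resolvable, since $\Delta(X)$-resolvability entails $\omega_1$-resolvability whenever $\Delta(X)\ge\omega_1$; hence $X$ is a space with strongly discrete tightness that is not maximally resolvable. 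These two steps give $\mathrm{Con}(1)\Rightarrow\mathrm{Con}(2)\Rightarrow\mathrm{Con}(3)$ for free, so the entire content of the theorem is concentrated in the remaining implication $\mathrm{Con}(3)\Rightarrow\mathrm{Con}(1)$.

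For that implication I would argue contrapositively at the level of inner models: assuming there is no inner model with a measurable cardinal, I would show that every space with strongly discrete tightness is maximally resolvable, which refutes $(3)$. Concretely, let $X$ have strongly discrete tightness and put $\kappa=\Delta(X)$; by passing to a nonempty open subspace I may assume every nonempty open set has size $\kappa$, and by the $\omega$-resolvability theorem of \cite{JSSz_MN} I know $\kappa\ge\omega_1$ and that countable resolutions are never the obstruction. The goal is to split $X$ into $\kappa$ pairwise disjoint dense sets by a transfinite recursion. The combinatorial engine is the following local object: given $x\in A'$, strongly discrete tightness supplies a strongly discrete $D\subseteq A$ with $x\in D'$ and pairwise disjoint open sets $U_d\ni d$, so that the trace on $D$ of the neighbourhood filter of $x$ is a proper filter $G_x$ whose positive sets are exactly the $S\subseteq D$ with $x\in\overline{S}$. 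Splitting $D$ into many $G_x$-positive pieces is precisely what lets the recursion keep $x$ in the closure of each partition class, so the only way the construction can fail to produce $\kappa$ dense sets is for some $G_x$ to refine to a nonprincipal ultrafilter that cannot be split into $\kappa$ small pieces, that is, a $\kappa$-complete (in particular countably complete) nonprincipal ultrafilter on a set of size $\le\kappa$.

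The existence of a countably complete nonprincipal ultrafilter yields a measurable cardinal and hence an inner model with a measurable, contradicting our assumption; therefore under ``no inner model with a measurable'' the recursion always succeeds and $X$ is maximally resolvable. This is exactly $\neg\,\mathrm{Con}(1)\Rightarrow\neg(3)$ relativized to inner models, i.e. $\mathrm{Con}(3)\Rightarrow\mathrm{Con}(1)$, closing the cycle.

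The main obstacle, and the reason the conclusion is only an equiconsistency rather than an outright equivalence, is making the passage between the \emph{global} failure of $\kappa$-resolvability and the \emph{local} ultrafilter obstruction both rigorous and complete. Two points require care. First, filters of the form $G_x$ are not automatically closed under intersection along the recursion, so one must organize the bookkeeping, using the disjoint witnesses $U_d$ of strong discreteness, so that the accumulated obstruction is a genuine filter and, when the recursion stalls, a genuine ultrafilter. Second, and more seriously, upgrading this obstruction from ``cannot be split finitely'' to full \emph{countable completeness} is where large-cardinal strength actually enters: the $\omega$-resolvability theorem of \cite{JSSz_MN} guarantees one cannot stall at the finite level, but ruling out a stall at every countable level, equivalently showing the obstruction ideal is $\sigma$-complete, is precisely what forces measurability. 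In the contrapositive direction this is where one invokes covering for the core model, under the hypothesis that no inner model has a measurable, to cover the potential obstruction and complete the splitting. I expect this covering-based completion of the recursion to be the technical heart of the argument.
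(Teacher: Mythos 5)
First, a point of reference: the paper does not prove this statement at all --- it is quoted as background from Juh\'asz and Magidor \cite{JM} --- so there is no in-paper argument to compare yours against; your proposal has to stand on its own. Your two easy reductions do stand: $\mathrm{Con}(1)\Rightarrow\mathrm{Con}(2)$ is literally the preceding theorem of \cite{JSSz_MN}, and $(2)\Rightarrow(3)$ is a correct ZFC observation, since an $\omega_1$-irresolvable space satisfies $\Delta(X)\ge\omega_1$ and is not $\omega_1$-resolvable, hence not $\Delta(X)$-resolvable.

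The gap is in $\mathrm{Con}(3)\Rightarrow\mathrm{Con}(1)$, which is the entire content of the theorem, and your sketch of it is both incomplete and internally inconsistent. You assert that the only way the splitting recursion can fail is that some trace filter $G_x$ ``refines to a countably complete nonprincipal ultrafilter.'' If that were literally established, you would obtain a measurable cardinal in $V$ outright (the least completeness cardinal of a nonprincipal $\sigma$-complete ultrafilter is measurable), i.e.\ you would have proved the ZFC implication $(3)\Rightarrow(1)$, which is strictly stronger than the equiconsistency being claimed; your own closing paragraph then retracts this by conceding that the obstruction is weaker and that one must instead invoke covering over the core model. That covering argument is not a detail to be deferred --- it is the theorem. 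Concretely, two steps are asserted but never argued: (i) that the obstruction accumulated along the recursion is an ultrafilter at all rather than a mere filter (your bookkeeping remark acknowledges this but does not resolve it), and (ii) how its defect is upgraded from ``not finitely splittable'' (already excluded in ZFC by the $\omega$-resolvability theorem of \cite{JSSz_MN}) to a decomposability or regularity failure strong enough that core-model technology (Donder-style: if there is no inner model with a measurable, every uniform ultrafilter on $\lambda$ is $(\omega,\lambda)$-regular, and regularity is what drives the splitting) produces an inner model with a measurable. As written, the hard direction is a plausible map of where large-cardinal strength should enter, not a proof of the implication.
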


In this paper we construct in ZFC a refinement $\tau^*$ of the natural topology of the real line $\mathbb{R}$ with the following properties:
$\Delta(\mathbb{R}, \tau^*)=\frak{c}$, the space $(\mathbb{R}, \tau^*)$ has hereditary NDT and it is hereditary $\omega_1$-irresolvable (Example \ref{ex_ndt}). The space $(\mathbb{R}, \tau^*)$ is not regular.

This example is constructed via a technique that we shall call XEI-processing.
Its basic notions and general properties are established in Section 3, whereas the specifics of the example are presented in Section 4.

Section 5 is devoted to another form of tightness, namely:

\begin{definition}[\cite{BM}]
A space $X$ is said to have {\it disjoint tightness}, if whenever $A \subseteq X$ and $x \in A'$, there are $A_1, A_2 \subseteq A$ such that $A_1 \cap A_2 = \emptyset$ and $x \in A_1' \cap A_2'$.
\end{definition}

In \cite{BM} Bella and Malykhin questioned whether there is a countable regular irresolvable space with disjoint tightness.
They also constructed an example of such a space under CH.
In \cite{BH} Bella and M.~Hru{\v s}{\'a}k proved the positive answer in ZFC.
Furthermore, they showed that such a space can be submaximal, i.e. with the property that all its dense subspaces are open.
Clearly, any crowded submaximal space is irresolvable.

\begin{theorem}[\cite{BH}]
\label{T_intro_BH}
If $X$ is a countable dense submaximal subspace of some Cantor cube $2^\kappa$, then $X$ has disjoint tightness.
\end{theorem}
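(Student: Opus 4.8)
The plan is to recast disjoint tightness as a statement about neighborhood traces and then to rule out the bad case using density together with the countable tightness and c.c.c.\ of the cube. Fix $A\subseteq X$ and $x\in A'$; replacing $A$ by $A\setminus\{x\}$ does not change $A'$, so I may assume $x\notin A$. For finite $F\subseteq\kappa$ write $[x{\restriction}F]=\{y\in 2^\kappa: y{\restriction}F=x{\restriction}F\}$ for the basic clopen neighborhood of $x$; the sets $N_F:=[x{\restriction}F]\cap A$ generate the trace on $A$ of the neighborhood filter of $x$, call it $\mathcal F$. Since $x\notin A$, the condition $x\in A_1'$ for $A_1\subseteq A$ is equivalent to ``$A_1$ meets every $N_F$'', i.e.\ $A_1\in\mathcal F^+$. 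Hence disjoint tightness at $(x,A)$ amounts to the existence of two disjoint members of $\mathcal F^+$, which is in turn equivalent to $\mathcal F$ not being an ultrafilter on $A$. So it suffices to prove that the trace filter $\mathcal F$ is never an ultrafilter. A short argument using the Hausdorffness of $2^\kappa$ shows each $N_F$ is infinite: if $N_F=\{a^{(1)},\dots,a^{(m)}\}$ were finite, enlarging $F$ by one separating coordinate for each $a^{(i)}$ would kill $N_F$ entirely, contradicting $x\in A'$. Thus were $\mathcal F$ an ultrafilter it would be a \emph{free} ultrafilter on the countable set $A$.

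Second, I would treat the case that is elementary and which I expect to be the engine of the whole argument. For $a\in A$ let $d(a)=\{\xi:a(\xi)\ne x(\xi)\}\ne\emptyset$ and let $D^*=\bigcup_{a\in A}d(a)$. If $D^*$ is countable, then all the neighborhoods relevant to $\mathcal F$ are already visible in the subproduct $2^{D^*}\cong 2^\omega$; concretely $\mathcal F$ is then generated by a countable family, and after organizing the $N_F$ into a decreasing chain $S_0\supseteq S_1\supseteq\cdots$ of infinite sets one builds two disjoint accumulating subsets by a diagonal choice: recursively pick distinct $b_k,c_k\in S_k$ and set $B_1=\{b_k:k\in\omega\}$, $B_2=\{c_k:k\in\omega\}$, so that $B_1$ and $B_2$ each meet every $S_k$ and are disjoint. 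This already yields two disjoint subsets of $A$ with $x$ in both derived sets, i.e.\ disjoint tightness at $(x,A)$, with no recourse to submaximality.

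Third, the substantive point is to reduce the general case to this countable-support case, and this is where I expect the main obstacle to lie and where submaximality and the c.c.c.\ of $2^\omega$ (more generally of $L^\omega$) must enter. When $\kappa$ is uncountable the point $x$ has no countable neighborhood base in $2^\kappa$, so I cannot a priori confine $A$ to countably many coordinates; the accumulation of $A$ at $x$ may genuinely invoke uncountably many coordinates, and it is precisely here that an ultrafilter trace could in principle appear. The submaximal non-cube space $\omega\cup\{u\}$ (a free ultrafilter with its isolated points) shows that density in a \emph{full} cube is indispensable, so the argument must exploit it. My plan is to show that an ultrafilter trace with uncountable support forces a forbidden configuration: from the assumption that no partition of $A$ keeps $x$ in both closures, together with submaximality (which is hereditary, so $A\cup\{x\}$ is again submaximal and all of its subsets are locally closed), I would extract an uncountable family of pairwise disjoint nonempty clopen sets sitting in a countable power of $2$, contradicting c.c.c. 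Intuitively, if the split is obstructed on every countable block of coordinates, the obstructions coming from distinct blocks must be pairwise independent, and density of $X$ converts that independence into an uncountable cellular family. Carrying out this extraction cleanly -- matching the recursion on coordinates against the supposed ultrafilter and verifying that the resulting open sets are genuinely nonempty and pairwise disjoint -- is the step I expect to require the most care, and it is exactly the place where the hypotheses ``$L$ has countable character'' and ``$L^\omega$ is c.c.c.'' are used to pass from the factor $2$ to a general factor $L$.
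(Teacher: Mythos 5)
Your first two paragraphs are fine as far as they go: recasting disjoint tightness at $(x,A)$ as ``the trace filter $\mathcal F$ is not an ultrafilter'' is correct, and the diagonal splitting of a countably generated trace is standard. But the case you actually settle --- $D^*=\bigcup_{a\in A}d(a)$ countable --- is essentially vacuous here: for a dense $X\subseteq 2^\kappa$ with $\kappa$ uncountable, the points of a typical $A$ differ from $x$ in coordinates spread over all of $\kappa$, so $D^*$ is uncountable and your second paragraph never applies. The whole content of the theorem sits in your third paragraph, and there you give only a plan (``I would extract an uncountable family\dots'', ``is the step I expect to require the most care''), not an argument: submaximality and density are never actually used (the remark that subsets of a submaximal space are locally closed is not brought to bear on anything), and no cellular family is constructed. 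This is a genuine gap, not a routine verification.

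The missing mechanism --- which is how both \cite{BH} and Section 5 of this paper proceed --- is different from the ultrafilter-versus-cellularity plan you sketch. Submaximality is used \emph{first}, via Proposition \ref{P_pre_submax}(3): given $x\in A'$ one may replace $A$ by a relatively open $U\subseteq A$ with $x\in U'$, so it suffices to split \emph{open} sets (the property called ODT in Section 5). By density of $X$ such a $U$ is the trace of an open $W\subseteq 2^\kappa$, and the c.c.c.\ of the cube, applied to a maximal cellular family of basic boxes inside $W$, yields a dense open $V\subseteq W$ determined by a \emph{countable} set of coordinates $M$. This is the correct substitute for your countability of $D^*$, and it is available precisely because $V$ is open --- it fails for arbitrary $A$. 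One then works in the first countable, hence Fr\'echet--Urysohn, subproduct $2^M$, splits there, and lifts the two disjoint open pieces back through the determining set. If you want to salvage your outline, the reduction you are groping for in paragraph three is exactly this ``dense open subset determined by countably many coordinates'' lemma; without it, or something equivalent, the argument does not close.
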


The premise of this theorem is not always false. In particular, in \cite{JSSz_D} I. Juh{\'a}sz, L. Soukup and Z. Szentmikl{\'o}ssy proved that for every $\lambda \geq \omega$ there is a dense submaximal subspace $X$ of the Cantor cube $2^{2^\lambda}$ such that $|X|=\Delta(X)=\lambda$.
Besides that, C.~Corral proved that there is a model of ZFC with a countable dense submaximal subspace of $2^{\omega_1}$, whereas $\frak{c} = \omega_2$ \cite{Corral}.

By carefully studying the proof of Theorem \ref{T_intro_BH}, one can note that countability of $X$ is not actually essential here.
We show it in Section 5 (Theorem \ref{T_BH}). We also decompose the proof to highlight some noteworthy properties and observations.
Moreover, we consider not only the class of Cantor cubes, but also some wider class of topological cubes (including Tychonoff cubes).
This generalization may be interesting because of the following two theorems.

\begin{theorem}[\cite{HPST}, Corollary 4]
	Let $M$ be a separable metric space consisting of more than one point.
	For each infinite $\kappa$, there is a dense submaximal normal (hence perfectly normal) subset $X \subseteq M^{2^\kappa}$ of cardinality $\kappa$.
\end{theorem}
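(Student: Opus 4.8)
The plan is to split the three demands --- density, submaximality, and normality --- and to meet them by realizing $X$ as the image of a fixed set of size $\kappa$ under an embedding $e\colon X\to M^{2^\kappa}$ built one coordinate at a time, exploiting that the number of coordinates, $2^\kappa$, matches the number of subsets of $X$ that must be controlled. Density is the cheap part: by the Hewitt--Marczewski--Pondiczery theorem a product of $2^\kappa$ separable factors has density $\le\kappa$, so $M^{2^\kappa}$ carries a dense set of size $\kappa$; since $|M|\ge 2$ the product is crowded, and a dense subspace of a crowded space is again crowded, so from the outset $X$ must be kept free of isolated points. For submaximality I would use the characterization that $X$ is submaximal iff every $A\subseteq X$ is locally closed, i.e. relatively open in $\overline A$, together with the elementary but decisive remark that local closedness is preserved under refinement of the topology: if $A=W\cap\overline A$ with $W$ open, then $A=W\cap\overline{A}'$ for any finer closure operator as well. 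Thus once a subset is ``fixed'' it stays fixed, which is what makes a single sweep through all subsets feasible despite there being $2^\kappa$ of them.

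The heart is a recursion of length $2^\kappa$. I would enumerate the power set as $\{A_\xi:\xi<2^\kappa\}$ and, using a partition of the coordinate index set into $2^\kappa$ blocks, devote coordinate $\xi$ to making $A_\xi$ locally closed while reserving a cofinal family of coordinates for density. To treat $A_\xi$ I compute, in the topology generated so far, its set of \emph{bad} points $Z_\xi=A_\xi\cap\overline{\,\overline{A_\xi}\setminus A_\xi\,}$ --- precisely the obstruction to $\overline{A_\xi}\setminus A_\xi$ being closed --- and choose $g_\xi\colon X\to M$ sending $Z_\xi$ and $\overline{A_\xi}\setminus A_\xi$ into disjoint open regions of $M$. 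Then $\overline{A_\xi}\setminus A_\xi$ becomes closed, $A_\xi$ turns locally closed, and by the monotonicity remark it stays so; after the sweep every subset is locally closed and $X$ is submaximal. The genuine obstacle lurks here: separating the bad part of $A_\xi$ pushes $A_\xi$ toward being clopen, and carrying this out crudely for \emph{every} subset would collapse $X$ to a discrete, hence non-dense, space. The care is to separate only $Z_\xi$, never all of $A_\xi$, and to keep each $g_\xi$ generic enough --- every value-region hit by many points, the whole family independent --- so that $e[X]$ still meets every basic open box and no point is isolated. Showing that this balancing of submaximality against crowdedness and density can be maintained at every stage is the main technical content I would have to supply.

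Finally, normality. For $\kappa=\omega$ it is automatic: a countable regular space is Lindel\"of, hence normal, and every subset of a countable $T_1$ space is $F_\sigma$, so the space is perfect and therefore perfectly normal; this already recovers the countable situation and dovetails with Theorem \ref{T_intro_BH}. For uncountable $\kappa$ I would weave normality into the same recursion, separating disjoint closed sets by reserving further blocks of coordinates and pushing the two sets into disjoint closed regions of $M$, using that $M$ is metric. I would also arrange, again using spare coordinates, that each point of $X$ is a $G_\delta$ and more generally that $X$ is perfect, so that ``hence perfectly normal'' follows from normality. I expect this normality step, interwoven with the density and submaximality requirements on one and the same set of $2^\kappa$ coordinates, to be the most delicate part of the argument for uncountable $\kappa$.
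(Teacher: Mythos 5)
This statement is quoted in the paper as an external result (\cite{HPST}, Corollary 4); the paper itself contains no proof of it, so your attempt can only be judged on its own merits. As it stands it is an outline with a genuine gap at its center, and you say so yourself: ``showing that this balancing \dots is the main technical content I would have to supply.'' The parts you do supply are sound --- Hewitt--Marczewski--Pondiczery gives $d(M^{2^\kappa})\le\kappa$; submaximality is equivalent to every subset being locally closed; local closedness is preserved when the topology is refined; and $Z_\xi=A_\xi\cap\overline{\overline{A_\xi}\setminus A_\xi}$ is exactly the obstruction to $A_\xi$ being locally closed. This is indeed the standard skeleton for such constructions (compare the $\mathcal{D}$-forced method of \cite{JSSz_D} and \cite{Lipin}). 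But the skeleton is the easy part.

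The concrete difficulty your plan does not survive is the interaction between the separating coordinates and density. Density of $e[X]$ in $M^{2^\kappa}$ is a condition on \emph{every} finite set of coordinates simultaneously: for all $\alpha_1,\dots,\alpha_n$ and nonempty open $W_1,\dots,W_n\subseteq M$ the set $\bigcap_i g_{\alpha_i}^{-1}(W_i)$ must be nonempty. So ``reserving a cofinal family of coordinates for density'' accomplishes nothing; the separating functions $g_\xi$ themselves must participate in this joint independence. Now take $A_\xi$ dense with dense complement in the stage-$\xi$ topology: then $\overline{A_\xi}\setminus A_\xi=X\setminus A_\xi$ and $Z_\xi=A_\xi$, so your $g_\xi$ must send $A_\xi$ and $X\setminus A_\xi$ into disjoint open subsets of $M$, i.e.\ it is essentially the indicator of $A_\xi$. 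Since the $A_\xi$ range over \emph{all} subsets of $X$, which do not form an independent family, the resulting functions cannot be jointly independent, and nothing in your sketch prevents some finite combination of them from having empty joint preimage (killing density) or from isolating a point. The actual proofs resolve this by only perturbing independent functions on sets that are controllably small in the current topology and by exploiting the structure of which $A_\xi$ genuinely need repair; none of that is present here. The normality clause for uncountable $\kappa$ has the same status: pairs of disjoint closed sets in the \emph{final} topology cannot be recognized at intermediate stages (a set can become closed only after stage $\xi$), so ``reserving further blocks of coordinates'' does not yet describe a working argument. Until these two points are filled in, the proposal is a program rather than a proof.
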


\begin{theorem}[\cite{Lipin}]
If $L$ is a $T_1$ space, $|L|>1$ and $d(L) \leq \kappa \geq \omega$, then there is a submaximal dense subspace $X$ of $L^{2^\kappa}$ such that $|X|=\Delta(X)=\kappa$.
\end{theorem}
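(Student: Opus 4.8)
Write $Y := L^{2^{\kappa}}$. The plan is to fix the underlying point set to be $\kappa$ itself and to realize $X$ as the image of an evaluation map $e\colon \kappa \to Y$ determined by a family of coordinate functions, so that the combinatorics of that family, rather than a point-by-point recursion, carries the whole argument. Since $d(L)\le\kappa$ and the number of factors is $2^{\kappa}\le 2^{\kappa}$, the Hewitt--Marczewski--Pondiczery theorem gives $d(Y)\le\kappa$, and I would encode a witnessing dense set through the coordinate family. Concretely, fix distinct $a,b\in L$ (possible as $|L|>1$) and a dense $D\subseteq L$ with $|D|\le\kappa$, index the $2^{\kappa}$ coordinates of $Y$, and assign to each a function $\kappa\to L$; then $X=e[\kappa]$ is dense, $|X|=\kappa$, and every nonempty basic open trace of $X$ is a finite intersection of preimages of basic open subsets of $L$. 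If the family is \emph{independent} in the sense that every such nonempty trace has size $\kappa$, then $X$ is crowded, $e$ is injective, and $\Delta(X)=|X|=\kappa$ is secured at once; the points going to values densely in $D$ are what keep $X$ dense in all of $Y$.

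Next I would reduce submaximality to a separation property of the family. For a dense $X\subseteq Y$, submaximality is equivalent to the statement that every subset of $X$ which is nowhere dense in $Y$ is relatively closed, which unwinds to
\[
(\star)\qquad\text{for all }z\in X\text{ and }N\subseteq X\setminus\{z\}\text{ with }\overline{N}^{\,Y}\text{ nowhere dense one has }z\notin\overline{N}^{\,Y}.
\]
Because $Y$ is a product and $L$ is $T_{1}$, the conclusion $z\notin\overline{N}^{\,Y}$ can be forced by a single coordinate: it suffices that some coordinate send the index of $z$ to a value $v$ and every index of $N$ to one point $b\ne v$, since then $a\notin\overline{\{b\}}=\{b\}$ yields a neighbourhood of $z$ missing $N$, while the remaining indices may be distributed densely in $D$ so as not to disturb density. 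Thus submaximality becomes: for every point and every nowhere-dense trace omitting it, the family contains a separating coordinate.

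The construction is then a transfinite recursion of length $2^{\kappa}$ building the coordinate family $\{S_{\xi}:\xi<2^{\kappa}\}$ and enumerating along the way the $\kappa\cdot 2^{\kappa}=2^{\kappa}$ tasks ``$(z,N)$''. At stage $\xi$, having committed to a subfamily of size $<2^{\kappa}$, I would add a coordinate that separates the current task when that task is destined to be nowhere dense, while remaining independent over the committed part, so that density, crowdedness and $\Delta(X)=\kappa$ survive to the end. The exact match between the $2^{\kappa}$ tasks and the $2^{\kappa}$ available coordinates is what makes a recursion of this length, rather than the length-$\kappa$ recursion that would bound $|X|$, the correct frame; note also that a nowhere dense $N$ has complement of size $\kappa$ containing a whole basic open set, so the indices left unconstrained by a task are always abundant enough to be filled densely.

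The hard part is the interaction inside a single step: separating $z$ from $N$ constrains the new coordinate on $\{z\}\cup N$ toward a ``set-like'' behaviour, whereas preserving density and the size-$\kappa$ traces requires that this new coordinate still split every committed Boolean-type combination into pieces of size $\kappa$; reconciling the two is delicate precisely because a nowhere-dense $N$ may itself have cardinality $\kappa$. Worse, whether $N$ is nowhere dense refers to the \emph{final} topology, which is not yet available at stage $\xi$, so the recursion must break this circularity, presumably by a policy that separates whenever independence permits and then verifies that every trace left unseparated is in fact somewhere dense. Showing that a suitable coordinate can always be chosen and that the bookkeeping closes is the combinatorial heart of the theorem, and it is here that the full exponent $2^{\kappa}$ and the hypotheses $|L|>1$ and $d(L)\le\kappa$ are all consumed; the passage from the Cantor-cube case $L=2$ to general $T_{1}$ spaces $L$ is comparatively routine, since density is carried by the $D$-valued coordinates while separation never needs more than the two values $a,b$ and the $T_{1}$ axiom.
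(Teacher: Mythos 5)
First, a point of reference: the present paper does not prove this statement --- it is quoted from \cite{Lipin}, and the proof lives there (building on the $\mathcal{D}$-forced technique of \cite{JSSz_D}). So there is no proof in this document to compare yours against, and I can only assess your proposal on its own terms. Your overall frame --- realize $X$ as $e[\kappa]$ for a family of $2^\kappa$ coordinate functions $\kappa\to L$ that is independent relative to a dense $D\in[L]^{\le\kappa}$, built by a recursion of length $2^\kappa$ that diagonalizes over $2^\kappa$ tasks --- is the right kind of plan and is in the spirit of the actual argument.

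That said, there are two genuine gaps. First, your reduction $(\star)$ is not equivalent to submaximality. By Proposition \ref{P_pre_submax} you must kill the accumulation points of every $A\subseteq X$ with empty \emph{interior}, and such a set need not be nowhere dense: if some codense $A\subseteq X$ were dense in a nonempty open piece of $X$, then $\overline{A}^{\,Y}$ would contain a whole basic open set, $(\star)$ would be vacuous for it, and yet $A$ would have accumulation points and witness non-submaximality. So besides the NODEC property you must separately guarantee that every codense subset of $X$ is nowhere dense (equivalently, that every open subspace of $X$ is irresolvable), and your separating-coordinate mechanism --- which produces a neighbourhood of $z$ missing $N$ by sending $z$ to $a$ and all of $N$ to $b$ --- cannot possibly handle a set whose closure contains a neighbourhood of $z$; a different device (this is exactly what the $\mathcal{D}$-forced machinery supplies) is required. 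Second, you correctly identify and then explicitly defer the combinatorial core: whether a task $(z,N)$ needs to be separated depends on nowhere density in the \emph{final} topology, which is not available at stage $\xi$, and your ``separate whenever independence permits, then verify the rest is somewhere dense'' policy is named but not executed; showing that a separating coordinate compatible with independence always exists when needed, and that every unseparated task is harmless, is precisely the content of the theorem. As written, the proposal is a plausible blueprint rather than a proof.
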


At the same time, the idea of the proof of Theorem \ref{T_BH} is almost identical to the proof of the original Theorem \ref{T_intro_BH}.

\section{Preliminaries}

We assume the following notation and conventions.

\begin{itemize}

\item If $X$ is a set and $\kappa$ is a cardinal, then
$[X]^{\kappa} = \{A \subseteq X : |A| = \kappa\}$ and $[X]^{<\kappa} = \{A \subseteq X : |A| < \kappa\}$.

\item Symbol $\bigsqcup$ denotes {\it disjoint union} in the following sense: it is equal to the usual union, but using it we assume that the united sets are pairwise disjoint.

\item {\it Space} means topological space.

\item The dispersion character of a space $(X,\tau)$ can be denoted by $\Delta(X)$, $\Delta(X, \tau)$ or $\Delta(\tau)$.

\end{itemize}

We also have to recall that a space $X$ is called {\it submaximal}, if all its dense subsets are open, and we also have to recall the following criterions.

\begin{proposition}
\label{P_pre_submax}
For every space $X$ the following conditions are equivalent:
\begin{enumerate}

\item[(1)] $X$ is submaximal;

\item[(2)] for every $A \subseteq X$ if the interior of $A$ is empty, then $A$ has no accumulation points.

\item[(3)] for every $A \subseteq X$ and $x \in A'$ there is open $U \subseteq A$ such that $x \in U'$.

\end{enumerate}
\end{proposition}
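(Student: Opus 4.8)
The plan is to establish the three equivalences cyclically, proving (1)$\Rightarrow$(2)$\Rightarrow$(3)$\Rightarrow$(1). Throughout I will lean on two elementary facts. First, a set $A \subseteq X$ has empty interior if and only if its complement $X \setminus A$ is dense, since $\operatorname{int} A = X \setminus \overline{X \setminus A}$. Second, the derived-set operator is finitely additive: $(E \cup F)' = E' \cup F'$ for all $E, F \subseteq X$, the nontrivial inclusion following by intersecting two neighborhoods that isolate $x$ from $E$ and from $F$ respectively. With these in hand, two of the three implications are almost immediate.

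For (2)$\Rightarrow$(3), given $A \subseteq X$ and $x \in A'$, I would simply take $U = \operatorname{int} A$, which is open and contained in $A$. The point is that $C := A \setminus U$ has empty interior: any open $V \subseteq C$ would satisfy both $V \subseteq \operatorname{int} A$ and $V \cap \operatorname{int} A = \emptyset$, forcing $V = \emptyset$. Hence (2) yields $C' = \emptyset$, and since $x \in A' = U' \cup C' = U'$ by additivity of the derived set, $U$ is the required open subset. For (3)$\Rightarrow$(1), I would argue by contraposition: if a dense set $D$ fails to be open, pick $x \in D \setminus \operatorname{int} D$; then every neighborhood of $x$ meets $A := X \setminus D$ while $x \notin A$, so $x \in A'$, and (3) supplies a nonempty open $U \subseteq A$, contradicting the density of $D$ since $U \cap D = \emptyset$.

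The heart of the matter is (1)$\Rightarrow$(2), and this is where I expect the only real work. Assume $X$ is submaximal and let $A$ have empty interior. The first half is painless: $X \setminus A$ is dense, hence open by submaximality, so $A$ is closed and $A' \subseteq A$. The subtlety is upgrading \emph{closed} to \emph{closed and discrete}, i.e.\ ruling out that some $x \in A$ is an accumulation point of $A$. The trick I would use is that deleting a single point preserves empty interior: from $A \setminus \{x\} \subseteq A$ we get $\operatorname{int}(A \setminus \{x\}) = \emptyset$, so by the same density-plus-submaximality argument $A \setminus \{x\}$ is closed, whence $x \notin \overline{A \setminus \{x\}}$ and $x$ is isolated in $A$. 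Thus $A \cap A' = \emptyset$, and combined with $A' \subseteq A$ this forces $A' = \emptyset$, which is exactly (2). The main obstacle, then, is not any single hard estimate but recognizing that submaximality must be applied twice—once to $A$ itself and once to each punctured set $A \setminus \{x\}$—in order to convert the mere closedness of codense sets into the full closed-discreteness that condition (2) demands.
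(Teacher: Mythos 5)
The paper states this proposition without proof --- it is ``recalled'' as a standard criterion in the Preliminaries --- so there is no argument of the author's to compare yours against. Your proof is correct and complete: the cycle (1)$\Rightarrow$(2)$\Rightarrow$(3)$\Rightarrow$(1) checks out, the two auxiliary facts ($\operatorname{int}A=\emptyset$ iff $X\setminus A$ is dense, and finite additivity of the derived-set operator) are used correctly, and the key observation in (1)$\Rightarrow$(2) --- applying submaximality to the punctured sets $A\setminus\{x\}$ as well as to $A$ itself to get closed \emph{discreteness} rather than mere closedness --- is exactly the standard way to establish this equivalence.
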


It is easy to see that every crowded submaximal space is irresolvable.

\section{XEI-processes}

Recall that a family $\mathcal{I}$ of subsets of a set $X$ is called an {\it ideal} on $X$, if all of the following conditions hold:

\begin{itemize}

\item[(1)] for all $S \in \mathcal{I}$ and $T \subseteq S$ we have $T \in \mathcal{I}$;

\item[(2)] for all $S,T \in \mathcal{I}$ we have $S \cup T \in \mathcal{I}$;

\item[(3)] $\emptyset \in \mathcal{I}$ and $X \notin \mathcal{I}$.

\end{itemize}

\begin{definition}
We say that a tuple $(X, \tau, \mathcal{E}, \mathcal{I}, \mathcal{I}^*, \tau^*)$ is an {\it XEI-process}, if all of the following conditions are satisfied:

\begin{itemize}

\item[(a)] $(X, \tau)$ is a space;

\item[(b)] $\mathcal{E}$ is a family of nonempty subsets of $X$ and $\mathcal{E} \supseteq \tau \setminus \{\emptyset\}$;

\item[(c)] if $K \in \mathcal{E}$ and $U \in \tau$, then $K \cap U \in \mathcal{E}\cup\{\emptyset\}$;

\item[(d)] $\mathcal{I}$ is an ideal on $X$. Moreover, $\mathcal{I} \supseteq [X]^{<\omega}$ and $\mathcal{I} \cap \mathcal{E} = \emptyset$;

\item[(e)] $\mathcal{I}^* \supseteq \mathcal{I}$ is an ideal on $X$, and it is maximal with the condition $\mathcal{I}^* \cap \mathcal{E} = \emptyset$;

\item[(f)] $\tau^*$ is the topology on $X$ with the base $\{U \setminus S : U \in \tau, S \in \mathcal{I}^*\}$.

\end{itemize}
\end{definition}

It is easy to see that every tuple $(X,\tau, \mathcal{E}, \mathcal{I})$ satisfying (a-d) can be extended to some XEI-process (by Zorn lemma).
Let us denote by (XEI) the condition ``$(X, \tau, \mathcal{E}, \mathcal{I}, \mathcal{I}^*, \tau^*)$ is an XEI-process''.
If (XEI), then for every $A \subseteq X$ we denote by $A^*$ the set of all accumulation points of $A$ in the topology $\tau^*$.

Let us note some general properties of XEI-processes.

\begin{proposition}
\label{P_XEI}
If {\rm (XEI)}, then all of the following statements hold:

\begin{enumerate}

\item[(1)] $\tau^*$ is a refinement of the topology $\tau$;

\item[(2)] if $S \in \mathcal{I}^*$, then $S^* = \emptyset$;

\item[(3)] $A \notin \mathcal{I}^*$ iff there is a set $K \in \mathcal{E}$ such that $K \setminus A \in \mathcal{I}^*$ (for all $A \subseteq X$);

\item[(4)] $K^* = K'$ for all $K \in \mathcal{E}$;

\item[(5)] if $\mathcal{I} \supseteq [X]^{<\Delta(\tau)}$, then $\Delta(\tau^*) = \Delta(\tau)$.

\end{enumerate}
\end{proposition}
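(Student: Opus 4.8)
The plan is to verify the five items in the order listed, since (1), (2) and (5) are short consequences of the definitions while (3) and (4) carry the real content. For (1) I would simply observe that each $U \in \tau$ equals $U \setminus \emptyset$ with $\emptyset \in \mathcal{I}^*$, so every $\tau$-open set is a basic $\tau^*$-open set and $\tau \subseteq \tau^*$. For (2), given $S \in \mathcal{I}^*$ and any $x \in X$, I would exhibit the basic $\tau^*$-neighborhood $X \setminus (S \setminus \{x\})$ of $x$ (legitimate because $X \in \tau$ and $S \setminus \{x\} \in \mathcal{I}^*$ by downward closure); it meets $S$ in at most $\{x\}$, so $x \notin S^*$, and hence $S^* = \emptyset$.

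The heart of the argument is (3), which is where the maximality in (e) gets used. For the forward direction I would form the ideal $\mathcal{J}$ generated by $\mathcal{I}^* \cup \{A\}$, namely $\mathcal{J} = \{T : T \subseteq A \cup S \text{ for some } S \in \mathcal{I}^*\}$, and split into two cases. If $X \notin \mathcal{J}$, then $\mathcal{J}$ is a proper ideal properly extending $\mathcal{I}^*$ (properly, since $A \in \mathcal{J} \setminus \mathcal{I}^*$), so by maximality of $\mathcal{I}^*$ it must meet $\mathcal{E}$; any witness $K \in \mathcal{E} \cap \mathcal{J}$ satisfies $K \setminus A \in \mathcal{I}^*$. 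If instead $X \in \mathcal{J}$, then $X \setminus A \in \mathcal{I}^*$, and since $X \in \tau \setminus \{\emptyset\} \subseteq \mathcal{E}$ the choice $K = X$ works. The converse is immediate: if $K \setminus A \in \mathcal{I}^*$ while $A \in \mathcal{I}^*$, then $K \subseteq (K \setminus A) \cup A \in \mathcal{I}^*$ would put $K$ in $\mathcal{I}^* \cap \mathcal{E}$, contradicting (e).

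For (4) the inclusion $K^* \subseteq K'$ follows from (1), since a finer topology makes accumulation harder. For $K' \subseteq K^*$ I would fix $x \in K'$ and an arbitrary basic $\tau^*$-neighborhood $U \setminus S$ of $x$ (so $x \in U \in \tau$ and $S \in \mathcal{I}^*$). Because $x \in K'$ and $x \in U$, the set $K \cap U$ is nonempty, hence $K \cap U \in \mathcal{E}$ by (c) and therefore $K \cap U \notin \mathcal{I}^*$. Since $S \in \mathcal{I}^*$, the set $(K \cap U) \setminus S$ also fails to lie in $\mathcal{I}^*$ (otherwise $K \cap U \subseteq ((K \cap U)\setminus S) \cup S \in \mathcal{I}^*$); in particular it is infinite, as $[X]^{<\omega} \subseteq \mathcal{I} \subseteq \mathcal{I}^*$. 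Thus $(U \setminus S) \cap (K \setminus \{x\}) \neq \emptyset$, giving $x \in K^*$.

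Finally, for (5) I would get $\Delta(\tau^*) \leq \Delta(\tau)$ from $\tau \subseteq \tau^*$, and for the reverse note that any nonempty $\tau^*$-open set contains a nonempty basic $U \setminus S$; if $|U \setminus S| < \Delta(\tau)$ then $U \setminus S \in [X]^{<\Delta(\tau)} \subseteq \mathcal{I}$, so $U \subseteq (U \setminus S) \cup S \in \mathcal{I}^*$ contradicts $U \in \tau \setminus \{\emptyset\} \subseteq \mathcal{E}$. I expect the forward direction of (3) to be the main obstacle, both because it is the only place the maximality of $\mathcal{I}^*$ is genuinely invoked and because of the easy-to-miss boundary case $X \in \mathcal{J}$, which is handled only by recalling that $X$ itself belongs to $\mathcal{E}$; the infinitude step in (4), leveraging $[X]^{<\omega} \subseteq \mathcal{I}^*$, is the other point that requires care.
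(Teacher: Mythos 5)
Your proposal is correct and follows essentially the same route as the paper: (1), (2) and (5) are handled by the same one-line observations, (4) by the same ``$K\cap U\in\mathcal{E}$ hence $K\cap U\notin\mathcal{I}^*$'' argument, and your careful two-case maximality argument for (3) is precisely the detail the paper compresses into ``follows from maximality of $\mathcal{I}^*$''.
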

\begin{proof}
(1) is obvious.

(2). First note that all $S \in \mathcal{I}^*$ are closed, because $X \setminus S$ are open.
Since $\mathcal{I}^*$ is an ideal, all such sets $S$ are hereditary closed, so $S^* = \emptyset$.

(3) follows from maximality of $\mathcal{I}^*$.

(4). It follows from (1) that $K^* \subseteq K'$. Let us take any point $x \in K'$ and show that $x \in K^*$.
Take any neighborhood $U \setminus S$ of the point $x$, where $U \in \tau$ and $S \in \mathcal{I}^*$.
Since $x \in K'$, we have $K \cap U \ne \emptyset$, so $K \cap U \in \mathcal{E}$.
Consequently, $K \cap U \notin \mathcal{I}^*$, so $K \cap U \setminus (S \cup \{x\}) \ne \emptyset$.

(5). Take any nonempty set $U \setminus S$, where $U \in \tau$ and $S \in \mathcal{I}^*$. Since $U \in \tau \setminus \{\emptyset\} \subseteq \mathcal{E}$, we have $U \notin \mathcal{I}^*$, so $U \setminus S \notin \mathcal{I}^*$, hence $|U \setminus S| \geq \Delta(\tau)$.
\end{proof}

Before we get into more complicated properties of XEI-processes, let us show a simple example.

\begin{example}
If {\rm (XEI)} and $\mathcal{E} = \tau \setminus \{\emptyset\}$, then the space $(X, \tau^*)$ is submaximal.
\end{example}
\begin{proof}
Suppose $A^* \ne \emptyset$ for some $A \subseteq X$. Then $A \notin \mathcal{I}^*$, so there is $K \in \mathcal{E}$ such that $K \setminus A \in \mathcal{I}^*$. Thus, the set $A$ contains a nonempty open subset $K \cap A$.
\end{proof}

\begin{lemma}
\label{XEI_acc}
If {\rm (XEI)}, then for every set $A \subseteq X$ and any point $x \in X$ the following conditions are equivalent:
\begin{enumerate}

\item[(1)] $x \in A^*$;

\item[(2)] whenever $x \in U \in \tau$, there is a set $K \in \mathcal{E}$ such that $K \subseteq U$ and $K \setminus A \in \mathcal{I}^*$.

\end{enumerate}
\end{lemma}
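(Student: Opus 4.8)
The plan is to route both conditions through a single intermediate statement $(\ast)$: for every $U \in \tau$ with $x \in U$ we have $A \cap U \notin \mathcal{I}^*$. I would then prove $(1) \Leftrightarrow (\ast)$ and $(2) \Leftrightarrow (\ast)$ separately, so that the two stated conditions meet in the middle. The workhorse throughout is Proposition \ref{P_XEI}(3), which translates membership in $\mathcal{I}^*$ into the language of $\mathcal{E}$.

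For $(1) \Leftrightarrow (\ast)$, I would unwind the definition of $A^*$ through the base of $\tau^*$. A point $x$ lies in $A^*$ exactly when every basic neighborhood $U \setminus S$ (with $U \in \tau$, $x \in U$, $S \in \mathcal{I}^*$, $x \notin S$) meets $A$ in a point other than $x$, i.e. $(A \cap U) \setminus (S \cup \{x\}) \ne \emptyset$. Since $\{x\} \in [X]^{<\omega} \subseteq \mathcal{I} \subseteq \mathcal{I}^*$, the sets $S \cup \{x\}$ form a cofinal family in $\mathcal{I}^*$, so this is equivalent to demanding $(A \cap U) \setminus T \ne \emptyset$ for all $T \in \mathcal{I}^*$. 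The latter is precisely $A \cap U \notin \mathcal{I}^*$: if $A \cap U \in \mathcal{I}^*$ take $T = A \cap U$, and conversely $(A\cap U) \setminus T = \emptyset$ means $A \cap U \subseteq T \in \mathcal{I}^*$. Quantifying over all $\tau$-neighborhoods $U$ of $x$ then gives exactly $(\ast)$.

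For $(2) \Leftrightarrow (\ast)$, fix $U \in \tau$ with $x \in U$. If there is $K \in \mathcal{E}$ with $K \subseteq U$ and $K \setminus A \in \mathcal{I}^*$, then $K \setminus (A \cap U) = K \setminus A \in \mathcal{I}^*$, so Proposition \ref{P_XEI}(3) yields $A \cap U \notin \mathcal{I}^*$. Conversely, assuming $A \cap U \notin \mathcal{I}^*$, Proposition \ref{P_XEI}(3) produces $K \in \mathcal{E}$ with $K \setminus (A \cap U) \in \mathcal{I}^*$, and I would replace $K$ by $K \cap U$. By clause (c) of the definition $K \cap U \in \mathcal{E} \cup \{\emptyset\}$, and it is in fact nonempty: writing $K = \big(K \cap (A \cap U)\big) \cup \big(K \setminus (A \cap U)\big)$, the second summand lies in $\mathcal{I}^*$ while $K \notin \mathcal{I}^*$ (as $\mathcal{E} \cap \mathcal{I}^* = \emptyset$), so $(K \cap U) \cap A = K \cap (A \cap U) \notin \mathcal{I}^*$. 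Hence $K \cap U \in \mathcal{E}$, clearly $K \cap U \subseteq U$, and $(K \cap U) \setminus A \subseteq K \setminus (A \cap U) \in \mathcal{I}^*$, which is condition (2) witnessed at $U$.

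The bookkeeping is routine; the two points that need care are the interaction of the distinguished point $x$ with the ideal in the first equivalence—so that deleting $x$ (as required by the definition of an accumulation point) costs nothing because $\{x\} \in \mathcal{I}^*$—and the use of clause (c) to shrink the witness $K$ inside $U$ in the nontrivial direction of the second equivalence. I expect no genuine obstacle beyond keeping these quantifier manipulations honest.
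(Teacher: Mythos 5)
Your proof is correct and follows essentially the same route as the paper's: both arguments hinge on Proposition \ref{P_XEI}(3) together with the fact that singletons lie in $\mathcal{I}^*$, and your intermediate condition $(\ast)$ (that $A\cap U\notin\mathcal{I}^*$ for every $\tau$-neighborhood $U$ of $x$) is exactly the pivot the paper passes through implicitly in each direction. The explicit factorization through $(\ast)$ is a mild reorganization, not a different argument, and all the individual steps (cofinality of the sets $S\cup\{x\}$, shrinking the witness via clause (c)) check out.
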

\begin{proof}
(1) $\to$ (2). Suppose that $x \in U \in \tau$. Since $x \in (A \cap U)^* \ne \emptyset$, we can conclude that $A \cap U \notin \mathcal{I}^*$.
It follows that there is a set $L \in \mathcal{E}$ such that $L \setminus (A \cap U) \in \mathcal{I}^*$. Denote $K = L \cap U$.
It is easy to see that $K$ is as required.

(2) $\to$ (1). Take any neighborhood $U \setminus S$ of the point $x$, where $U \in \tau$ and $S \in \mathcal{I}^*$.
Denote $T = S \cup \{x\}$ and $H = A \cap U \setminus T$.
We have to prove that $H \ne \emptyset$.

Take $K \in \mathcal{E}$ such that $K \subseteq U$ and $K \setminus A \in \mathcal{I}^*$.
Denote $R = T \cup (K \setminus A)$. Clearly, $R  \in \mathcal{I}^*$, so $K \setminus R \ne \emptyset$.
It remains to note that $K \setminus R \subseteq H$.
\end{proof}

Suppose $\kappa$ is a cardinal.
Recall that an ideal $\mathcal{I}$ on a set $X$ is called $\kappa$-{\it saturated}, if for every family $\mathcal{A}$ of subsets of $X$ such that $|\mathcal{A}|=\kappa$ and $A \cap B \in \mathcal{I}$ for all distinct $A,B \in \mathcal{A}$ we have $\mathcal{A} \cap \mathcal{I} \ne \emptyset$.

Let us say that a family of sets $\mathcal{E}$ satisfies the property {\rm (A$\empty_\kappa$)}, if for every $\mathcal{H} \in [\mathcal{E}]^\kappa$ there are different $K,L \in \mathcal{H}$ and $M \in \mathcal{E}$ such that $K \cap L \supseteq M$.

\begin{lemma}
\label{XEI_irres}
If {\rm (XEI)} and the family $\mathcal{E}$ satisfies {\rm (A$\empty_\kappa$)}, then all the following statements hold:

\begin{enumerate}
\item[(1)] the ideal $\mathcal{I}^*$ is $\kappa$-saturated;

\item[(2)] if $\bigsqcup\limits_{\alpha<\kappa} A_\alpha \subseteq X$, then there is $\alpha$ such that $A_\alpha^*=\emptyset$;

\item[(3)] the space $(X,\tau^*)$ is hereditary $\kappa$-irresolvable.
\end{enumerate}
\end{lemma}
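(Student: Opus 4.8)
The plan is to prove the three statements in sequence, using (A$_\kappa$) as the combinatorial engine and then translating each conclusion into the language of $\mathcal{I}^*$ and $\tau^*$.

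First I would establish (1), that $\mathcal{I}^*$ is $\kappa$-saturated. Suppose toward a contradiction that $\mathcal{A}$ is a family of $\kappa$ subsets of $X$, pairwise intersecting in $\mathcal{I}^*$, with $\mathcal{A} \cap \mathcal{I}^* = \emptyset$. Since each $A \in \mathcal{A}$ satisfies $A \notin \mathcal{I}^*$, Proposition \ref{P_XEI}(3) gives a set $K_A \in \mathcal{E}$ with $K_A \setminus A \in \mathcal{I}^*$. The family $\mathcal{H} = \{K_A : A \in \mathcal{A}\}$ lies in $\mathcal{E}$; if I can arrange $|\mathcal{H}| = \kappa$ then (A$_\kappa$) produces distinct $K_A, K_B \in \mathcal{H}$ and $M \in \mathcal{E}$ with $M \subseteq K_A \cap K_B$. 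Then $M \setminus (A \cap B) \subseteq (K_A \setminus A) \cup (K_B \setminus B) \in \mathcal{I}^*$, while $A \cap B \in \mathcal{I}^*$ by hypothesis, so $M \in \mathcal{I}^*$, contradicting $M \in \mathcal{E}$ and $\mathcal{I}^* \cap \mathcal{E} = \emptyset$. The one delicate point is the cardinality bookkeeping: if the map $A \mapsto K_A$ collapses $\mathcal{A}$ to fewer than $\kappa$ sets, two distinct $A, B$ could share the same $K$, but then $K \setminus A, K \setminus B \in \mathcal{I}^*$ force $K \setminus (A \cap B) \in \mathcal{I}^*$, again making $K \in \mathcal{I}^*$; so this degenerate case is handled directly and I may assume $|\mathcal{H}| = \kappa$.

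Next I would derive (2) from (1). Given a disjoint family $\bigsqcup_{\alpha<\kappa} A_\alpha \subseteq X$, the sets $A_\alpha$ are pairwise disjoint, hence their pairwise intersections are $\emptyset \in \mathcal{I}^*$ trivially. By $\kappa$-saturation there is some $\alpha$ with $A_\alpha \in \mathcal{I}^*$, and then Proposition \ref{P_XEI}(2) yields $A_\alpha^* = \emptyset$.

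Finally, for (3), hereditary $\kappa$-irresolvability, I would argue that no subspace $Y \subseteq X$ with $\Delta(Y, \tau^*) \geq \kappa$ can be split into $\kappa$ dense pieces. Suppose $Y = \bigsqcup_{\alpha<\kappa} D_\alpha$ with each $D_\alpha$ dense in $Y$. Applying (2) to this disjoint decomposition (viewed inside $X$) gives some $\alpha$ with $D_\alpha^* = \emptyset$, i.e.\ $D_\alpha$ has no accumulation points in $(X,\tau^*)$. Since $\tau^*$ refines $\tau$ which contains no isolated points on a set of full dispersion character, a dense subset of a crowded open piece must have accumulation points; more precisely, density of $D_\alpha$ in $Y$ together with $\Delta(Y) \geq \kappa \geq \omega$ forces some point of $Y$ to be an accumulation point of $D_\alpha$, contradicting $D_\alpha^* = \emptyset$. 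The main obstacle I anticipate is making this last contradiction airtight: I must verify that the relative notion of accumulation point in the subspace $Y$ agrees with membership in $D_\alpha^*$ computed in the whole space, and that $\Delta(Y,\tau^*) \geq \kappa$ genuinely prevents a dense set from being closed-discrete. This likely reduces to the observation that any dense subset of a crowded $\tau^*$-open set of size $\geq \kappa$ cannot be relatively closed, which in turn I would pull back to the $\mathcal{I}^*$ description via Lemma \ref{XEI_acc}.
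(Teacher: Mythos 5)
Your proposal is correct and follows essentially the same route as the paper: part (1) is the same contradiction via (A$\empty_\kappa$) applied to the sets $K_A$ witnessing $A \notin \mathcal{I}^*$, and (2) and (3) are derived in the same chain (the paper simply writes ``follows from (1)/(2)'' where you supply the details). Your explicit treatment of the degenerate case where $A \mapsto K_A$ is not injective is a small point of extra care that the paper's one-line appeal to (A$\empty_\kappa$) glosses over.
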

\begin{proof}
(1). Suppose, on the contrary, that there is a family $\mathcal{A}$ of subsets of $X$ such that $|\mathcal{A}|=\kappa$, $A \cap B \in \mathcal{I}^*$ for all different $A,B \in \mathcal{A}$ and $\mathcal{A} \cap \mathcal{I}^* = \emptyset$.
Thus, for every $A \in \mathcal{A}$ there is a set $K_A \in \mathcal{E}$ such that $K_A \setminus A \in \mathcal{I}^*$.
By the condition (A$\empty_\kappa$) there are distinct $A,B \in \mathcal{A}$ and $M \in \mathcal{E}$ such that $K_A \cap K_B \supseteq M$.
On the other hand, $K_A \cap K_B \subseteq (K_A \setminus A) \cup (K_B \setminus B) \cup (A \cap B) \in \mathcal{I}^*$.
It is a contradiction.

(2) follows from (1).
(3) follows from (2).
\end{proof}

\section{An example of $\omega_1$-irresolvable space with NDT}

In this section we use the following notation:

\begin{itemize}

\item $(\mathbb{R}, \tau_\mathbb{R})$ is the real line with its natural topology;

\item $\mu$ is the Lebesgue measure on $\mathbb{R}$;

\item $\mathcal{L}$ is the family of all nonempty measurable sets $K \subseteq \mathbb{R}$ such that for every $U \in \tau$ either $K \cap U = \emptyset$ or $\mu(K \cap U) > 0$.

\end{itemize}

We shall get the required example $(\mathbb{R}, \tau^*)$ by extending the tuple $(\mathbb{R}, \tau_\mathbb{R}, \mathcal{L}, [\mathbb{R}]^{<\frak{c}})$ to an XEI-process.
At first, we need to point out some properties (no doubt, well-known) of the family $\mathcal{L}$.

\begin{proposition}
\label{P_L}
If $A \subseteq \mathbb{R}$ and $\mu(A)>0$, then the set $A$ contains an element of the family $\mathcal{L}$.
\end{proposition}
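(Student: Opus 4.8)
The plan is to extract from $A$ its ``measure-theoretic support'' by discarding the part of $A$ that lies inside open sets meeting $A$ in measure zero. Concretely, I would set
$N = \bigcup\{U \in \tau_\mathbb{R} : \mu(A \cap U) = 0\}$,
the largest open set meeting $A$ in measure zero, and then put $K = A \setminus N$.

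The first key step is to show that $\mu(A \cap N) = 0$, so that $K$ inherits the full measure of $A$. A priori $N$ is a union of possibly uncountably many open sets, and an uncountable union of null sets need not be null; this is precisely the subtle point of the argument. The resolution is that $\mathbb{R}$ is second countable, hence hereditarily Lindel\"of: the open cover $\{U \in \tau_\mathbb{R} : \mu(A \cap U) = 0\}$ of $N$ admits a countable subcover, so $A \cap N$ is contained in a countable union of sets of measure zero and is therefore itself null. Consequently $\mu(K) = \mu(A) > 0$, which shows that $K$ is nonempty and measurable.

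It then remains to verify that $K \in \mathcal{L}$. Take any $U \in \tau_\mathbb{R}$ with $K \cap U \ne \emptyset$; I must show $\mu(K \cap U) > 0$. Suppose instead $\mu(K \cap U) = 0$. Since $K \cap U = (A \cap U) \setminus N$ and $\mu(A \cap N) = 0$, this forces $\mu(A \cap U) = 0$, so $U$ is one of the open sets whose union defines $N$, whence $U \subseteq N$. But then $K \cap U = (A \setminus N) \cap U \subseteq (A \setminus N) \cap N = \emptyset$, contradicting $K \cap U \ne \emptyset$. Therefore every open set meeting $K$ meets it in positive measure, so $K \in \mathcal{L}$ with $K \subseteq A$, as required.

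The only delicate point is the reduction of the defining union of $N$ to a countable subcover; everything else is routine bookkeeping with the definitions of $N$ and $\mathcal{L}$. I expect this second-countability (Lindel\"of) step to be the heart of the argument, and it is exactly the feature of $\mathbb{R}$ that makes the statement ``no doubt, well-known.''
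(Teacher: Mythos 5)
Your proof is correct and follows essentially the same route as the paper: both remove from $A$ the union of the open sets meeting $A$ in measure zero, and both justify that this union is null by a countability reduction (the paper takes rational-endpoint intervals directly, you invoke the Lindel\"of property, which is the same second-countability fact).
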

\begin{proof}
Denote by $\mathcal{H}$ the family of all intervals $U \subseteq \mathbb{R}$ such that both ends of $U$ are rational and $\mu(A \cap U) = 0$.
Denote $H = \bigcup \mathcal{H}$. Clearly, $\mu(A \cap H) = 0$. Denote $K = A \setminus H$. It is easy to see that $K \in \mathcal{L}$.
\end{proof}

\begin{proposition}
\label{P_LNDL}
For every $K \in \mathcal{L}$ there is a set $L \subseteq K$ such that $L \in \mathcal{L}$ and $L$ is nowhere dense in $K$ (in the topology $\tau_\mathbb{R}$).
\end{proposition}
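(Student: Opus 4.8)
The plan is to build $L$ in two stages: first I would produce a subset $L_0 \subseteq K$ of positive measure that is nowhere dense in the subspace $K$ (with the topology induced by $\tau_{\mathbb R}$), and then trim $L_0$ down to a member of $\mathcal{L}$ using Proposition~\ref{P_L}. This two-stage strategy works because nowhere density in $K$ is \emph{inherited by subsets} (if $A \subseteq B$ and $B$ is nowhere dense in $K$, then $\mathrm{cl}_K(A) \subseteq \mathrm{cl}_K(B)$, so $A$ is nowhere dense in $K$ too), and because the trimming supplied by Proposition~\ref{P_L} discards only a set of measure zero, hence preserves both nonemptiness (indeed positive measure) and nowhere density.

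For the first stage I would fix a countable set $D = \{d_n : n \in \omega\}$ dense in $K$, which exists since $K \subseteq \mathbb{R}$ is separable. Around each $d_n$ I would choose an open interval $J_n \ni d_n$ so small that $\sum_n \mu(J_n) < \mu(K)$ (if $\mu(K) = \infty$ it suffices to keep the sum finite). Put $G = \bigcup_n J_n$ and $L_0 = K \setminus G$. Then $\mu(L_0) \ge \mu(K) - \sum_n \mu(J_n) > 0$, since $\mu(K \cap G) \le \sum_n \mu(J_n)$. Moreover $D \subseteq G \cap K$, so $G \cap K$ is dense in $K$; as $G$ is open, $L_0 = K \setminus G$ is relatively closed in $K$, and any nonempty relatively open $W \cap K$ meets the dense set $G \cap K$ and therefore cannot be contained in $L_0$. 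Thus $L_0$ has empty interior in $K$ and, being closed in $K$, is nowhere dense in $K$.

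For the second stage, since $\mu(L_0) > 0$, Proposition~\ref{P_L} yields a set $L \subseteq L_0$ with $L \in \mathcal{L}$; in particular $L$ is nonempty. Because $L \subseteq L_0$ and $L_0$ is nowhere dense in $K$, the set $L$ is nowhere dense in $K$ as well, so $L$ has all the required properties.

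The step I expect to be the only genuine obstacle is reconciling the two conflicting demands on $L$: being nowhere dense in $K$ forces $L$ to be topologically \emph{thin}, whereas membership in $\mathcal{L}$ forces it to be \emph{fat} in measure on every relatively open piece it meets. The resolution is precisely the split above — secure topological thinness first by deleting a dense open set of small measure, and only afterwards restore the local positive-measure condition via the fringe-trimming of Proposition~\ref{P_L}, which costs nothing topologically because it removes only a null set.
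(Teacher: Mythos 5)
Your proof is correct and follows essentially the same route as the paper's: cover a countable dense subset of $K$ by open intervals of small total measure, take the complement in $K$ (a relatively closed, nowhere dense subset of positive measure), and then apply Proposition~\ref{P_L} to extract a member of $\mathcal{L}$. The only difference is cosmetic --- you leave the interval lengths unspecified where the paper takes radius $\frac{1}{4^n}\mu(K)$, and in doing so you also cover the case $\mu(K)=\infty$, which the paper's explicit choice glosses over.
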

\begin{proof}
Choose any countable dense subset $Q = \{q_n : n \in \mathbb{N}\}$ of the set $K$.
For all $n \in \mathbb{N}$ denote $U_n = (q_n - \frac{1}{4^n}\mu(K), q_n + \frac{1}{4^n}\mu(K))$.
Construct $U = \bigcup\limits_{n \in \mathbb{N}} U_n$.
Obviously, $\mu(U) \leq \frac{2}{3}\mu(K)$, so $\mu(K \setminus U)>0$.
Finally, by the previous Proposition \ref{P_L} there is $L \subseteq K \setminus U$ such that $L \in \mathcal{L}$.
It is clear that $L$ is nowhere dense in $K$.
\end{proof}

\begin{proposition}\label{P_LA}
	The family $\mathcal{L}$ satisfies the property {\rm (A$\empty_{\omega_1}$)}.
\end{proposition}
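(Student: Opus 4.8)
The plan is to reduce the combinatorial property {\rm (A$\empty_{\omega_1}$)} to a purely measure-theoretic statement and then invoke Proposition \ref{P_L}. Recall first that every $K \in \mathcal{L}$ has positive measure: applying the defining condition of $\mathcal{L}$ with $U = \mathbb{R} \in \tau_\mathbb{R}$ and $K \cap \mathbb{R} = K \ne \emptyset$ gives $\mu(K) > 0$. Hence, given a family $\mathcal{H} = \{K_\alpha : \alpha < \omega_1\} \in [\mathcal{L}]^{\omega_1}$, it suffices to find distinct $\alpha, \beta$ with $\mu(K_\alpha \cap K_\beta) > 0$. Indeed, $K_\alpha \cap K_\beta$ would then have positive measure, so by Proposition \ref{P_L} it contains some $M \in \mathcal{L}$, and the triple $(K_\alpha, K_\beta, M)$ witnesses {\rm (A$\empty_{\omega_1}$)} with $K_\alpha \cap K_\beta \supseteq M$.

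So the whole task becomes the claim that any $\omega_1$-sized family of measurable sets of positive measure in $\mathbb{R}$ contains two members whose intersection has positive measure. First I would argue by contradiction, assuming all pairwise intersections are null. Since $\mathbb{R} = \bigcup_m [-m,m]$ and each $\mu(K_\alpha) > 0$, for every $\alpha$ there are $m_\alpha, n_\alpha \in \mathbb{N}$ with $\mu(K_\alpha \cap [-m_\alpha, m_\alpha]) > 1/n_\alpha$. The map $\alpha \mapsto (m_\alpha, n_\alpha)$ sends $\omega_1$ into the countable set $\mathbb{N} \times \mathbb{N}$, so by pigeonhole some fixed pair $(m,n)$ is attained on an uncountable, in particular infinite, set $S \subseteq \omega_1$.

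The key step is a volume count inside the finite-measure window $[-m,m]$. Choosing $k$ distinct indices $\alpha_1, \dots, \alpha_k \in S$ with $k > 2mn$, the pairwise-null assumption makes the measure of the union additive, so
\[
\mu\!\left(\bigcup_{i=1}^k \bigl(K_{\alpha_i} \cap [-m,m]\bigr)\right) = \sum_{i=1}^k \mu\bigl(K_{\alpha_i}\cap[-m,m]\bigr) > \frac{k}{n} > 2m,
\]
which contradicts the fact that this union is contained in $[-m,m]$, a set of measure $2m$. This establishes the claim and, with the reduction above, completes the proof.

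I do not expect a serious obstacle here; the only subtlety worth flagging is that uncountability of $\mathcal{H}$ is genuinely essential, since a countable family such as $\{[j, j+1) : j \in \mathbb{Z}\}$ consists of pairwise disjoint sets of positive measure. The role of $\omega_1$ is precisely to force the pigeonhole step to concentrate infinitely many of the sets into a single bounded window of finite measure, where finite additivity can be exploited against a fixed volume bound.
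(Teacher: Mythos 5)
Your proof is correct and follows essentially the same route as the paper: contrapose via Proposition \ref{P_L} to reduce to pairwise-null intersections, localize to a bounded window $[-m,m]$ with a fixed measure threshold $1/n$ by pigeonhole, and contradict finite additivity against the volume bound $2m$. The paper phrases this as showing each class $\mathcal{H}_n^k=\{K : \mu(K\cap[-n,n])>1/k\}$ is finite (hence $\mathcal{H}$ countable), but that is the same argument.
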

\begin{proof}
	Suppose $\mathcal{H} \subseteq \mathcal{L}$ and for all distinct $K,L \in \mathcal{H}$ the intersection $K \cap L$ contains no element of $\mathcal{L}$.
	It follows from Proposition \ref{P_L} that $\mu(K \cap L) = 0$.
	For every $n,k \in \mathbb{N}$ let us denote $\mathcal{H}_n^k = \{K \in \mathcal{H} : \mu(K \cap [-n, n]) > \frac{1}{k}\}$.
	Clearly, $\mathcal{H} = \bigcup_{n,k \in \mathbb{N}} \mathcal{H}_n^k$.
	For all $K_1,\ldots,K_m \in \mathcal{H}$ and $n \in \mathbb{N}$ we have $\mu(\bigcup_{i=1}^m K_i \cap [-n, n]) = \sum_{i=1}^m \mu(K_i \cap [-n, n])$.
	It follows that every $\mathcal{H}_n^k$ is finite, so $|\mathcal{H}| \leq \omega$.
\end{proof}

We shall show that our example has hereditary NDT. Let us make an obvious observation.

\begin{proposition}
For every space $X$ the following properties are equivalent:

\begin{enumerate}

\item[(1)] all subspaces of $X$ have NDT;

\item[(2)] whenever $A \subseteq X$ and $x \in A'$, there is a set $N \subseteq A$ such that $N$ is nowhere dense in $A$ and $x \in N'$.

\end{enumerate}
\end{proposition}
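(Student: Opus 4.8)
The plan is to isolate two transfer facts for nowhere density relating a set, a subspace, and the ambient space, combine them with the routine preservation of accumulation points in subspaces, and then read off both implications almost mechanically. Throughout, for $Z \subseteq X$ and $M \subseteq Z$ I write $M'_Z$ for the set of accumulation points of $M$ computed in the subspace $Z$, and I will use that for $x \in Z$ and $M \subseteq Z$ one has $x \in M'_Z$ iff $x \in M'$ (in $X$), since the $Z$-neighbourhoods of $x$ are precisely the traces on $Z$ of the $X$-neighbourhoods and $M \subseteq Z$.

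The first and main ingredient is the following monotonicity of nowhere density: if $N \subseteq B \subseteq Y$ and $N$ is nowhere dense in $B$, then $N$ is nowhere dense in $Y$. To prove it I would argue by contradiction: put $C = \overline{N}^Y$ and suppose $V := \operatorname{int}_Y C$ is nonempty. If $V$ met $B$, then $V \cap B$ would be a nonempty set open in $B$ and contained in $C \cap B = \overline{N}^B$, exhibiting a nonempty $B$-interior of $\overline{N}^B$ and contradicting that $N$ is nowhere dense in $B$; hence $V \cap B = \emptyset$. But $N \subseteq B$ then gives $V \cap N = \emptyset$, and since $V$ is open this forces $V \cap C = V \cap \overline{N}^Y = \emptyset$, contradicting $\emptyset \ne V \subseteq C$. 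This is the step I expect to be the crux: it runs in the opposite direction to the naive guess (nowhere density in the smaller space yielding it in the larger one), so the argument has to exploit that the witnessing open set $V$ is trapped off the carrier $B$ of $N$.

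The second ingredient is the classical partial converse for dense subspaces: if $D$ is dense in $Y$, $N \subseteq D$, and $N$ is nowhere dense in $Y$, then $N$ is nowhere dense in $D$. Here I would suppose $\overline{N}^D = \overline{N}^Y \cap D$ had nonempty $D$-interior, captured by a nonempty $Y$-open set $W$ with $W \cap D \subseteq \overline{N}^Y$; density of $D$ makes $W \cap D$ dense in $W$, so $W \subseteq \overline{W \cap D}^Y \subseteq \overline{N}^Y$, again contradicting that $N$ is nowhere dense in $Y$.

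With these in hand the two implications are short. For (2) $\Rightarrow$ (1), fix a subspace $Y$, a set $B \subseteq Y$ and $x \in B'_Y$; then $x \in Y$ and $x \in B'$, so (2) applied to $A = B$ yields $N \subseteq B$ nowhere dense in $B$ with $x \in N'$. By the first ingredient $N$ is nowhere dense in $Y$, and by preservation of accumulation points $x \in N'_Y$, which is exactly NDT for $Y$. For (1) $\Rightarrow$ (2), fix $A \subseteq X$ and $x \in A'$ and work in the subspace $Y = A \cup \{x\}$, in which $A$ is dense (as $x \in A'$) and $x \in A'_Y$; applying NDT of $Y$ to the set $A$ gives $N \subseteq A$ nowhere dense in $Y$ with $x \in N'_Y$. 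The second ingredient, via density of $A$ in $Y$, turns this into ``$N$ nowhere dense in $A$'', and preservation of accumulation points gives $x \in N'$, which is (2). The only mild care needed is the degenerate case $x \in A$, where $Y = A$ and the density step is vacuous.
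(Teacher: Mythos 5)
Your proof is correct. The paper states this proposition as an ``obvious observation'' and gives no proof at all, so there is nothing to diverge from; your two transfer lemmas (nowhere density passes up from a subspace to the ambient space, and passes down to a dense subspace containing the set) together with the transitivity of the subspace topology and the preservation of accumulation points are exactly the standard facts needed, and both implications are argued soundly, including the choice of the auxiliary subspace $A \cup \{x\}$ in the direction $(1)\Rightarrow(2)$.
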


\begin{example}
\label{ex_ndt}
A refinement $\tau^*$ of the topology $\tau_\mathbb{R}$ such that $\Delta(\tau^*) = \frak{c}$, the space $(\mathbb{R}, \tau^*)$ has hereditary NDT and it is hereditary $\omega_1$-irresolvable (furthermore, whenever $\bigsqcup\limits_{\alpha<\omega_1} A_\alpha \subseteq \mathbb{R}$, at least one of the sets $A_\alpha$ has no accumulation points).
\end{example}
\begin{proof}
As it was told, we take the tuple $(\mathbb{R}, \tau_\mathbb{R}, \mathcal{L}, [\mathbb{R}]^{<\frak{c}})$ and extend it to some XEI-process
$(\mathbb{R}, \tau_\mathbb{R}, \mathcal{L}, [\mathbb{R}]^{<\frak{c}}, \mathcal{I}^*, \tau^*)$.

The property $\Delta(\tau^*)=\frak{c}$ follows from Proposition \ref{P_XEI}(5).
Hereditary $\omega_1$-irresolvability (and its declared stronger version) follows from Lemma \ref{XEI_irres} and Proposition \ref{P_LA}.
It remains to prove hereditary NDT.

Suppose we are given a set $A \subseteq \mathbb{R}$ and a point $x \in A^*$.

Recursively by $n \in \mathbb{N}$ let us choose intervals $U_n \subseteq \mathbb{R}$ and sets $K_n \in \mathcal{L}$ in such a way that
$U_n \subseteq (x - \frac{1}{n}, x + \frac{1}{n})$,
$x \notin U_n'$,
$U_n$ are pairwise disjoint,
$K_n \subseteq U_n$ and
$K_n \setminus A \in \mathcal{I}^*$.

To see that it is possible, choose an interval $V_n \subseteq \mathbb{R}$ such that $x \in V_n \subseteq (x - \frac{1}{n}, x + \frac{1}{n})$ and $V_n \cap U_k = \emptyset$ for all $k<n$,
then apply Lemma \ref{XEI_acc} to obtain a set $M_n \in \mathcal{L}$ such that $M_n \subseteq V_n$ and $M_n \setminus A \in \mathcal{I}^*$,
then take an interval $U_n \subseteq V_n$ such that $x \notin U_n'$ and $\mu(M_n \cap U_n) > 0$, and finally define $K_n = M_n \cap U_n$.

Now by Proposition \ref{P_LNDL} there are sets $L_n \subseteq K_n$ such that $L_n \in \mathcal{L}$ and $L_n$ is nowhere dense in $K_n$ in the topology $\tau_\mathbb{R}$.
Denote $L = \bigcup\limits_{n \in \mathbb{N}} L_n$ and $N = L \cap A$.

Let us show that $N$ is nowhere dense in $A$ (in $\tau^*$).
Take any $U \in \tau$ and $S \in \mathcal{I}^*$ such that $N \cap U \setminus S \ne \emptyset$.
Clearly, we can suppose that $U \subseteq U_n$ for some $n \in \mathbb{N}$, so $L_n \cap A \cap U \setminus S \ne \emptyset$.
Since $L_n$ is nowhere dense in $K_n$ (in $\tau_\mathbb{R}$),
there is an interval $V \subseteq U$ such that $L_n \cap V = \emptyset$ and still $K_n \cap V \ne \emptyset$ (so $K_n \cap V \in \mathcal{L}$).
Thus, for the set $V \setminus S \subseteq U \setminus S$ we have $N \cap V \setminus S = \emptyset$, whereas $A \cap V \setminus S \ne \emptyset$.
It follows that $N$  is nowhere dense in $A$ in the topology $\tau^*$.

Finally, by Lemma \ref{XEI_acc} we have $x \in N^*$, because whenever $x \in U \in \tau$ there is $L_n$ such that $L_n \subseteq U$.
\end{proof}

\begin{remark}
The space $(\mathbb{R}, \tau^*)$ from Example \ref{ex_ndt} is not regular.
For instance, there is no neighborhood $V$ of the point $0$ such that $\overline{V} \subseteq (-1, 1) \setminus \{\frac{1}{n} : n \in \mathbb{N}\} \in \tau^*$.
\end{remark}

One more observation on the space $(\mathbb{R}, \tau^*)$ requires the following

\begin{lemma}\label{LMeasRes}
	For every $K \in \mathcal{L}$ there are sets $L_n \in \mathcal{L}$, $n \in \omega$, such that $\bigsqcup_{n \in \omega} L_n \subseteq K$ and every $L_n$ is dense in $K$ in the topology $\tau_\mathbb{R}$.
\end{lemma}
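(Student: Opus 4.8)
The plan is to reduce the statement to a ``two-piece'' splitting and then iterate. Fix $K \in \mathcal{L}$ and let $\{I_m : m \in \omega\}$ enumerate all intervals with rational endpoints that meet $K$; since $K \in \mathcal{L}$, each such $m$ satisfies $\mu(K \cap I_m) > 0$. Two observations organize everything. First, a nonempty measurable union of members of $\mathcal{L}$ is again a member of $\mathcal{L}$: if such a union meets an open $U$, then so does one of the united sets, whence the union has positive measure in $U$. Second, a set $L \subseteq K$ is dense in $K$ in the topology $\tau_\mathbb{R}$ precisely when $L \cap I_m \neq \emptyset$ for every $m$. Granting these, it suffices to prove the following \emph{splitting claim}: every $M \in \mathcal{L}$ admits a partition $M = A \sqcup B$ with $A, B \in \mathcal{L}$, both dense in $M$. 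Indeed, applying the claim to $M = K$ yields $L_0 := A$ together with a leftover $B \in \mathcal{L}$ dense in $K$; since the rational intervals meeting $B$ are exactly those meeting $K$, a subset of $B$ dense in $B$ is automatically dense in $K$, so we may reapply the claim to $B$, and recursing through all $n \in \omega$ produces pairwise disjoint $L_n \in \mathcal{L}$, each dense in $K$, with $\bigsqcup_{n} L_n \subseteq K$.

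It remains to establish the splitting claim, i.e.\ to produce $A \subseteq M$ with $0 < \mu(A \cap I) < \mu(M \cap I)$ for every rational interval $I$ meeting $M$ (a ``fat and co-fat'' subset relative to $M$); one then takes $B = M \setminus A$. I would build $A$ recursively: enumerate the rational intervals meeting $M$ and, at stage $m$, place inside $M \cap I_m$ two disjoint compact nowhere dense positive-measure subsets of small measure, one earmarked for $A$ and one for $B$, each disjoint from everything placed at earlier stages. By Proposition~\ref{P_L} these chunks contain members of $\mathcal{L}$, so each $I_m$ receives a positive-measure $\mathcal{L}$-chunk in each piece, which (with the first observation) yields density and membership in $\mathcal{L}$ for both $A$ and $B$. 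Feasibility of stage $m$ rests on the elementary fact that a finite union $P$ of closed nowhere dense sets has open dense complement; hence, provided $\mu(P \cap I_m) < \mu(M \cap I_m)$, the region $(M \cap I_m) \setminus P$ contains a nonempty open subinterval meeting $M$, and therefore, since $M \in \mathcal{L}$, has positive measure and room for the next chunks.

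The main obstacle is precisely guaranteeing that this recursion never exhausts $M$ on an interval, i.e.\ that the accumulated placed measure stays strictly below $\mu(M \cap I)$ at the moment each $I$ is processed. This is delicate: by the Lebesgue density theorem any positive-measure chunk has density $1$ at almost all of its points, so one cannot keep a single chunk's relative measure small in \emph{all} subintervals at once, and a naive ``budget'' bounding each chunk relative only to its own target interval does not by itself control what that chunk contributes to a much smaller interval processed later. I expect this non-exhaustion step to be the technical heart, and I would handle it by the standard fat Cantor construction of a co-fat set: since each rational interval is processed only once and its committed $\mathcal{L}$-chunks already witness density, one only needs strictly positive leftover \emph{at the instant of processing}, and combining the open-dense-complement fact above with compact nowhere dense chunks of rapidly summable measure keeps $\mu(P \cap I_m) < \mu(M \cap I_m)$ throughout. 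This proves the splitting claim, and iterating it as in the first paragraph yields the desired family $\{L_n\}$, completing the proof.
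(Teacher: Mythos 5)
Your overall architecture --- placing pairwise disjoint positive-measure $\mathcal{L}$-chunks into every rational interval meeting $K$ and distributing them among the pieces --- is essentially the paper's proof; the paper does it in a single recursion, enumerating $\mathcal{U}\times\omega$ and assigning the $i$-th chunk to $L_{n_i}$, rather than via your split-into-two-and-iterate reduction, but that difference is cosmetic. The genuine problem is the step you yourself flag as the ``technical heart'': guaranteeing that at stage $m$ the set $(M\cap I_m)\setminus P$ still has positive measure, where $P$ is the finite union of previously placed chunks. Neither of your two tools closes this. Nowhere density of the chunks \emph{in $\mathbb{R}$} is not enough: there are compact nowhere dense sets $M\in\mathcal{L}$ (fat Cantor sets), and for such an $M$ the set $M\cap\overline{J}$, for a small subinterval $\overline{J}\subseteq I_0$ meeting $M$, is a perfectly legitimate ``compact, nowhere dense, positive-measure chunk of small measure'' --- yet it contains $M\cap I_m$ entirely for every later rational interval $I_m\subseteq J$, making stage $m$ infeasible. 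And no summable measure budget can save you: since $\inf_m \mu(M\cap I_m)=0$, the positive measure already committed at any fixed stage exceeds $\mu(M\cap I_m)$ for infinitely many later $m$; worse, by the Lebesgue density argument you yourself invoke, \emph{no} positive-measure chunk can satisfy $\mu(C\cap I)<c\,\mu(M\cap I)$ for all rational intervals $I$ simultaneously. So the closing sentence ``rapidly summable measure keeps $\mu(P\cap I_m)<\mu(M\cap I_m)$ throughout'' is exactly the naive budget you correctly dismissed two sentences earlier, and the claim is not established.

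The fix is to require each chunk to be nowhere dense in $K$ \emph{in the relative topology of $K$}, which is precisely what the paper's Proposition~\ref{P_LNDL} supplies (an $L\in\mathcal{L}$ with $L\subseteq K$ and $L$ nowhere dense in $K$). Then $P$, being a finite union of such sets, is nowhere dense in $K$, so $K\cap I_m$ contains a nonempty relatively open piece $K\cap J$ disjoint from $\overline{P}$, and $\mu(K\cap J)>0$ simply because $K\in\mathcal{L}$ and $J$ is open; no measure bookkeeping is needed at all. With that single replacement your argument goes through: the two preliminary observations are correct, reserving a $B$-chunk in each interval does make $B=M\setminus A$ dense and a member of $\mathcal{L}$ regardless of how much measure $A$ accumulates, and the iteration of the splitting claim correctly assembles the family $\{L_n\}$.
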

\begin{proof}
	Denote by $\mathcal{U}$ the family of all intervals $(a,b) \subseteq \mathbb{R}$ such that $a,b \in \mathbb{Q}$ and $K \cap (a,b) \ne \emptyset$.
	Enumerate $\mathcal{U} \times \omega = \{(U_i, n_i) : i \in \omega\}$.
	Recursively on $i \in \omega$, choose pairwise disjoint sets $M_i \subseteq K \cap U_i$ such that $M_i \in \mathcal{L}$ and all $M_i$ are nowhere dense in $K$ (which is possible by Proposition \ref{P_LNDL}).
	Define $L_n$ as the union of all $M_i$ such that $n_i = n$.
	It is clear that the sets $L_n$ are as required.
\end{proof}

Also let us recall

\begin{proposition}[\cite{Ceder}, Theorem 4]\label{PCeder}
	A space $X$ is $\kappa$-resolvable iff every its nonempty open subset contains a nonempty $\kappa$-resolvable subspace.
\end{proposition}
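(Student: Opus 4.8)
The plan is to prove the two implications separately, with the entire substance lying in the ``if'' direction. For ``only if'' I would observe that a nonempty open subspace of a $\kappa$-resolvable space is itself $\kappa$-resolvable: if $\{D_\alpha : \alpha<\kappa\}$ are pairwise disjoint dense subsets of $X$ and $U$ is nonempty open, then the traces $\{D_\alpha \cap U : \alpha<\kappa\}$ are pairwise disjoint and each is dense in $U$ (density is preserved upon intersecting with an open set), so $U$ is the required $\kappa$-resolvable subspace of itself.

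For ``if'', assume every nonempty open subset of $X$ contains a nonempty $\kappa$-resolvable subspace. First I would apply Zorn's lemma to the collection of all pairwise disjoint families of nonempty $\kappa$-resolvable subspaces of $X$ (ordered by inclusion, with unions of chains serving as upper bounds) to obtain a maximal such family $\mathcal{R}$. The key claim is that $\bigcup\mathcal{R}$ is dense in $X$: were it not, $U = X \setminus \overline{\bigcup\mathcal{R}}$ would be nonempty and open and hence would contain a nonempty $\kappa$-resolvable subspace $Y$; as $Y \subseteq U$ is disjoint from every member of $\mathcal{R}$, the family $\mathcal{R} \cup \{Y\}$ would violate maximality.

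It then remains to glue the local resolutions into a global one. For each $R \in \mathcal{R}$ I fix pairwise disjoint sets $\{D^R_\alpha : \alpha<\kappa\}$, each dense in the subspace $R$, and put $D_\alpha = \bigcup_{R\in\mathcal{R}} D^R_\alpha$. The $D_\alpha$ are pairwise disjoint, since $D^R_\alpha \cap D^{R'}_\beta \subseteq R \cap R' = \emptyset$ for $R \neq R'$ and $D^R_\alpha \cap D^R_\beta = \emptyset$ for $\alpha \neq \beta$. For density, given nonempty open $G \subseteq X$ I use density of $\bigcup\mathcal{R}$ to find $R$ with $G \cap R \neq \emptyset$; then $G \cap R$ is nonempty and relatively open in $R$, so $D^R_\alpha$ meets it, whence $D_\alpha \cap G \neq \emptyset$. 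Thus each $D_\alpha$ is dense in $X$, and $X$ is $\kappa$-resolvable.

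The step I expect to be the main obstacle is precisely this last gluing: the pieces $R$ need not be open, and relative density inside a non-open set does not in general propagate to the ambient space. What rescues the argument is that the \emph{union} of the pieces is dense, so every nonempty open $G$ already meets some $R$ in a relatively open set, against which the relative density of $D^R_\alpha$ can be tested directly.
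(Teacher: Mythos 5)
Your proof is correct, and it is essentially the standard argument (a maximal pairwise disjoint family of $\kappa$-resolvable subspaces via Zorn's lemma, whose union must be dense, followed by gluing the local resolutions). The paper itself gives no proof of this statement --- it is quoted from Ceder's paper as a known result --- so there is nothing to diverge from; your handling of the one delicate point, namely that density of $D^R_\alpha$ in the possibly non-open piece $R$ is tested only against sets of the form $G\cap R$ with $G$ open in $X$, is exactly right.
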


\begin{remark}
All crowded subspaces of the space $(\mathbb{R}, \tau^*)$ are $\omega$-resolvable.
\end{remark}
\begin{proof}
	By Proposition \ref{PCeder} it is enough to prove that every nonempty crowded subspace of $(\mathbb{R}, \tau^*)$ contains an $\omega$-resolvable subspace.
	Suppose a nonempty set $X \subseteq \mathbb{R}$ is crowded in the topology $\tau^*$.
	In particular, $X \notin \mathcal{I}^*$, so there is a set $K \in \mathcal{L}$ such that $K \setminus X \in \mathcal{I}^*$.
	We will prove that the subspace $K \cap X$ is $\omega$-resolvable.	
	
	By Lemma \ref{LMeasRes} there are sets $L_n \in \mathcal{L}$ such that $\bigsqcup_{n \in \omega} L_n \subseteq K$ and every $L_n$ is dense in $K$ in the topology $\tau_\mathbb{R}$. By Proposition \ref{P_XEI}(4) it follows that every $L_n$ is dense in $K$ in the topology $\tau^*$ as well.
	Since $L_n \setminus X \subseteq K \setminus X \in \mathcal{I}^*$, we have $(L_n \cap X)^* = L_n^*$, so every $L_n \cap X$ is dense in $K \cap X$.
\end{proof}

\section{Disjoint tightness and submaximality}

The technique of this section generally repeats the technique in \cite{BH}, but we obtain a noticeably stronger result.

\begin{definition}
We say that a space $X$ has {\it open disjoint tightness} ({\it ODT}) if for every open set $U \subseteq X$ and any point $x \in U'$ there are open sets $U_1,U_2 \subseteq U$ such that $U_1 \cap U_2 = \emptyset$ and $x \in U_1' \cap U_2'$.
\end{definition}

\begin{proposition}
\label{PHFU}
Every Hausdorff Frechet-Urysohn space has ODT.
\end{proposition}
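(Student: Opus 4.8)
The plan is to reduce the statement to a purely sequential fact and then to manufacture the two disjoint open sets from a single convergent sequence by splitting it along even and odd indices. Fix an open set $U \subseteq X$ and a point $x \in U'$. Since $x$ is an accumulation point of $U$, we have $x \in \overline{U \setminus \{x\}}$, so the Fr\'echet--Urysohn property supplies a sequence $(x_n)_{n \in \omega}$ in $U \setminus \{x\}$ with $x_n \to x$. Every term satisfies $x_n \ne x$, which is what will let the resulting open sets accumulate at $x$ from within $U$.

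The heart of the argument, and the step where Hausdorffness is genuinely used, is to thin $(x_n)$ to a subsequence whose terms admit \emph{pairwise disjoint} open neighborhoods inside $U$. I would construct this recursively, maintaining a strictly increasing sequence of indices $n_0 < n_1 < \cdots$, pairwise disjoint open sets $A_k \ni x_{n_k}$ with $A_k \subseteq U$, and a decreasing chain $B_0 \supseteq B_1 \supseteq \cdots$ of open neighborhoods of $x$ satisfying $A_k \cap B_k = \emptyset$ and $A_{k+1} \subseteq B_k$. At each step convergence $x_n \to x$ guarantees infinitely many terms inside the current $B_k$, so I can choose the next term $x_{n_{k+1}} \in B_k$; Hausdorffness then separates $x_{n_{k+1}}$ from $x$ by disjoint open sets, which I intersect with $B_k$ and with $U$ to obtain $A_{k+1}$ and $B_{k+1}$. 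Disjointness of $A_{k+1}$ from every earlier $A_i$ is then automatic, since $A_{k+1} \subseteq B_k \subseteq B_i$ while $A_i \cap B_i = \emptyset$. The key point is that I only ever separate a single point from the limit $x$, never two terms from each other, and let the nested neighborhoods $B_k$ enforce mutual disjointness for free; this is precisely why plain Hausdorffness, rather than any stronger separation axiom, suffices.

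With the pairwise disjoint $A_k \subseteq U$ in hand, I would set $U_1 = \bigcup_{k \text{ even}} A_k$ and $U_2 = \bigcup_{k \text{ odd}} A_k$. Both are open and contained in $U$, and $U_1 \cap U_2 = \emptyset$ because the $A_k$ are pairwise disjoint. Finally $x \in U_1' \cap U_2'$: every neighborhood $W$ of $x$ contains $x_{n_k}$ for all sufficiently large $k$, and since $x \notin A_k$ (as $x \in B_k$ and $A_k \cap B_k = \emptyset$), the point $x_{n_k} \ne x$ lies in $W \cap (U_1 \setminus \{x\})$ for infinitely many even $k$ and in $W \cap (U_2 \setminus \{x\})$ for infinitely many odd $k$. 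This yields exactly the open sets required by ODT. I expect the recursive extraction of the strongly discrete subsequence to be the only delicate part; the passage from closure to a convergent sequence and the even/odd splitting are routine.
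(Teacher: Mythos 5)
Your proof is correct: the paper states Proposition \ref{PHFU} without proof, treating it as routine, and your argument (extract a convergent sequence from $U \setminus \{x\}$ via the Fr\'echet--Urysohn property, recursively thin it to terms with pairwise disjoint open neighborhoods inside $U$ using only Hausdorff separation of each term from the limit $x$ together with nested neighborhoods $B_k$ of $x$, then split along even and odd indices) is exactly the standard argument one would supply. All steps check out, including the automatic disjointness $A_{k+1} \cap A_i = \emptyset$ from $A_{k+1} \subseteq B_k \subseteq B_i$ and $A_i \cap B_i = \emptyset$, and the fact that $x \notin A_k$ so each neighborhood of $x$ meets $U_1 \setminus \{x\}$ and $U_2 \setminus \{x\}$.
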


Let us fix some notation for restrictions. If $x : \kappa \to L$ and $M \subseteq \kappa$, then we denote by $x|_M$ the restriction of $x$ to $M$.
Moreover, if $A \subseteq L^\kappa$, then we denote $A|_M = \{x|_M : x \in A\}$.

\begin{definition}
Suppose we are given a space $L$ and a cardinal $\kappa$.
We say that a set $M \subseteq \kappa$ {\it determines} a set $A \subseteq L^\kappa$, if for all $x \in A$ and $y \in L^\kappa \setminus A$ we have $x|_M \ne y|_M$.
\end{definition}

\begin{proposition}
If a set $A \subseteq L^\kappa$ is determined by a set $M \subseteq \kappa$, then for every point $x \in L^\kappa$ the conditions
$x \in \overline{A}$ and $x|_M \in \overline{A|_M}$ are equivalent.
\end{proposition}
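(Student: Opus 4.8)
The plan is to first reformulate the hypothesis that $M$ determines $A$ into a cleaner cylinder condition, and then to prove the two implications separately, the forward one being immediate from continuity and the backward one requiring a coordinate-splicing argument.

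First I would observe that ``$M$ determines $A$'' is equivalent to saying that membership in $A$ depends only on the coordinates in $M$: if $x|_M = y|_M$ and $x \in A$, then $y \in A$, since otherwise $x \in A$ and $y \in L^\kappa \setminus A$ would violate the defining condition $x|_M \ne y|_M$. In terms of the projection $\pi_M : L^\kappa \to L^M$ sending $z$ to $z|_M$, this says precisely that $A = \pi_M^{-1}(A|_M)$, i.e. $A$ is a full cylinder over its image $A|_M$. This reformulation is the key to the whole argument.

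For the direction $x \in \overline{A} \Rightarrow x|_M \in \overline{A|_M}$, I would simply invoke continuity of the projection $\pi_M$ for the product topologies. Continuity gives $\pi_M(\overline{A}) \subseteq \overline{\pi_M(A)} = \overline{A|_M}$, so $x|_M = \pi_M(x) \in \overline{A|_M}$. The converse is the only nontrivial part. Suppose $x|_M \in \overline{A|_M}$ and take an arbitrary basic open neighborhood $W = \prod_\alpha W_\alpha$ of $x$, where $W_\alpha = L$ off a finite set $F$. Restricting $W$ to the coordinates in $M$ yields a basic open neighborhood of $x|_M$ in $L^M$, so there is some $a \in A$ with $a_\alpha \in W_\alpha$ for all $\alpha \in M$. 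I would then splice: define $b \in L^\kappa$ by $b|_M = a|_M$ and $b_\alpha = x_\alpha$ for $\alpha \notin M$. Since $b|_M = a|_M \in A|_M$ and $A = \pi_M^{-1}(A|_M)$, we get $b \in A$; and $b \in W$ because on $F \cap M$ we have $b_\alpha = a_\alpha \in W_\alpha$, while on $F \setminus M$ we have $b_\alpha = x_\alpha \in W_\alpha$. Thus $W \cap A \ne \emptyset$, and since $W$ was arbitrary, $x \in \overline{A}$.

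The main obstacle, though a mild one, is the backward direction, specifically the need to reconcile two competing constraints on the single candidate point $b$: its coordinates in $M$ must match some witness $a \in A$ (to guarantee $b \in A$), while its finitely many constrained coordinates in $F \setminus M$ must match $x$ (to guarantee $b \in W$). The cylinder reformulation $A = \pi_M^{-1}(A|_M)$ is exactly what lets both requirements be met simultaneously, since it frees us to alter the coordinates outside $M$ arbitrarily without leaving $A$.
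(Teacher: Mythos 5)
Your proof is correct. The paper states this proposition without proof, treating it as a routine observation, and your argument --- reformulating ``$M$ determines $A$'' as $A = \pi_M^{-1}(A|_M)$, using continuity of the projection for one direction and a coordinate-splicing of a witness $a \in A$ with $x$ outside $M$ for the other --- is exactly the standard verification the author is implicitly relying on.
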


\begin{proposition}
If a space $L^\omega$ is c.c.c., then for every cardinal $\kappa$ and any open $U \subseteq L^\kappa$ there is an open set $V \subseteq U$ which is dense in $U$ and determined by a countable set.
\end{proposition}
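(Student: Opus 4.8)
The plan is to obtain $V$ as the union of a maximal pairwise disjoint family of basic open boxes contained in $U$, and then to read off a countable determining set from the finite supports of those boxes. The whole argument hinges on one preliminary fact, which I expect to be the main obstacle: that the hypothesis ``$L^\omega$ is c.c.c.'' upgrades to ``$L^\kappa$ is c.c.c. for every $\kappa$''. First I would note that each finite power $L^n$ is c.c.c., being a continuous (indeed open) image of $L^\omega$ under a coordinate projection, since the c.c.c. is preserved by continuous surjections. Then I would invoke the classical principle that a product all of whose finite subproducts are c.c.c. is itself c.c.c.; every finite subproduct of $L^\kappa$ is some $L^n$, so this applies. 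This last implication is the real engine and the delicate point: given an uncountable family of pairwise disjoint nonempty basic open sets, the $\Delta$-system lemma yields an uncountable subfamily whose finite supports form a $\Delta$-system with root $R$; projecting to the finite subproduct over $R$ (which is c.c.c.) forces two members to have intersecting $R$-traces, and since the supports agree only on $R$ the two boxes then meet, a contradiction.

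With c.c.c. of $L^\kappa$ secured, I would fix an open $U \subseteq L^\kappa$ and, by Zorn's lemma, choose a family $\mathcal{W}$ of pairwise disjoint nonempty basic open sets contained in $U$ that is maximal with respect to inclusion. By c.c.c. the family $\mathcal{W}$ is countable, say $\mathcal{W} = \{W_n : n \in \omega\}$, and I set $V = \bigcup_{n \in \omega} W_n$. Then $V$ is open and $V \subseteq U$. Density of $V$ in $U$ is exactly where maximality is used: if some nonempty open $O \subseteq U$ were disjoint from $V$, then any basic open box contained in $O$ could be adjoined to $\mathcal{W}$, contradicting its maximality; hence $V$ meets every nonempty open subset of $U$.

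It remains to exhibit the countable set determining $V$. Each basic box $W_n$ has a finite support $S_n \subseteq \kappa$, and $W_n$ is determined by $S_n$ (membership in a box over $S_n$ depends only on the coordinates in $S_n$). Put $M = \bigcup_{n \in \omega} S_n$, a countable subset of $\kappa$. To see that $M$ determines $V$, suppose $x \in V$ and $y \in L^\kappa$ satisfy $x|_M = y|_M$; choosing $n$ with $x \in W_n$ and using $S_n \subseteq M$ we get $x|_{S_n} = y|_{S_n}$, so $y \in W_n \subseteq V$ because $W_n$ is determined by $S_n$. Thus no point outside $V$ can agree with a point of $V$ on $M$, i.e. $M$ determines $V$, and the construction is complete. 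Everything after the c.c.c.-productivity step in the first paragraph is routine bookkeeping with supports.
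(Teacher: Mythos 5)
Your proof is correct and follows essentially the same route as the paper: take a maximal disjoint family of basic boxes inside $U$, use c.c.c.\ of $L^\kappa$ to see it is countable, and let $M$ be the union of the finite supports. The only difference is that you spell out (correctly, via the $\Delta$-system lemma) the step ``$L^\omega$ c.c.c.\ implies $L^\kappa$ c.c.c.,'' which the paper simply cites as known.
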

\begin{proof}
Denote by $\mathcal{W}$ any maximal disjoint family of subsets of $U$ from the standard Tychonoff base of $L^\kappa$, i.e. of the form $W=\prod\limits_{\alpha<\kappa} W_\alpha$, where all $W_\alpha$ are open in $L$ and the set $\Gamma_W = \{\alpha<\kappa : W_\alpha \ne L\}$ is finite.
Clearly, the set $V=\bigsqcup \mathcal{W}$ is dense in $U$.
Since $L^\omega$ is c.c.c., the space $L^\kappa$ is c.c.c. too 
, so $|\mathcal{W}|\leq\omega$.
It remains to note that the set $V$ is determined by the set $\bigcup\limits_{W \in \mathcal{W}} \Gamma_W$, which is countable.
\end{proof}

\begin{lemma}
If a Hausdorff space $L^\omega$ has a countable character and c.c.c., then for every cardinal $\kappa$ the space $L^\kappa$ has ODT.
\end{lemma}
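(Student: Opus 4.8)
The plan is to reduce ODT for $L^\kappa$ to the Fréchet–Urysohn situation handled by Proposition \ref{PHFU}, using the two preceding propositions to pass from an arbitrary open set in $L^\kappa$ to something controlled by countably many coordinates. Concretely, suppose we are given an open set $U \subseteq L^\kappa$ and a point $x \in U'$. The goal is to produce disjoint open $U_1, U_2 \subseteq U$ with $x \in U_1' \cap U_2'$. Since ODT is about a single open set and an accumulation point, I expect the whole argument to take place inside a countable-coordinate ``shadow'' of $U$.

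First I would apply the last proposition (the c.c.c.\ consequence) to obtain an open $V \subseteq U$ that is dense in $U$ and determined by a countable set $M \subseteq \kappa$. Because $V$ is dense in $U$ and $x \in U'$, one checks that $x \in V'$ as well, so it suffices to split $V$. Next I would project to the coordinates in $M$: the restriction map $L^\kappa \to L^M$ sends $V$ to $V|_M$, and by the ``determined'' proposition the condition $x \in \overline{V}$ is equivalent to $x|_M \in \overline{V|_M}$, and similarly $x \in V'$ corresponds to $x|_M$ being an accumulation point of $V|_M$ in $L^M$. The point of moving to $L^M$ is that $M$ is countable, so $L^M$ is a countable power of $L$; since $L$ has countable character and $L^\omega$ is assumed Hausdorff with countable character, $L^M$ (being homeomorphic to a subpower of $L^\omega$, or directly $L^\omega$ when $M$ is infinite) is Hausdorff and first-countable, hence Fréchet–Urysohn.

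Now I would invoke Proposition \ref{PHFU}: the open set $V|_M$ in the Hausdorff Fréchet–Urysohn space $L^M$, together with the accumulation point $x|_M$, yields disjoint open sets $W_1, W_2 \subseteq V|_M$ with $x|_M \in W_1' \cap W_2'$. Pulling these back along the restriction map gives $U_i = \{ y \in V : y|_M \in W_i \}$ for $i = 1,2$, which are open in $L^\kappa$, contained in $V \subseteq U$, and disjoint since the $W_i$ are. The final task is to transfer the accumulation statement back up: from $x|_M \in W_i'$ I need $x \in U_i'$, and this is exactly where the ``determined by $M$'' property is used again—because $V$ (and each cylinder over $W_i$ relative to $V$) is controlled by $M$, membership of $x$ in $\overline{U_i}$ reduces to membership of $x|_M$ in $\overline{W_i}$.

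The main obstacle I anticipate is the bookkeeping around the restriction correspondence: I must make sure that $V|_M$ is genuinely open in $L^M$ (not merely that $V$ is open in $L^\kappa$) and that the equivalence between accumulation points upstairs and downstairs holds for the specific sets $U_i$, not just for $V$ itself. This requires verifying that each $U_i$ is again determined by $M$ (or at least that the ``accumulation point'' half of the determined-set proposition applies to it), which should follow because $U_i$ is a full cylinder over $W_i$ intersected with the $M$-determined set $V$. Handling the degenerate case where $M$ is finite (so $L^M$ is a finite power, still Hausdorff and first-countable, hence Fréchet–Urysohn) is routine but should be mentioned to keep the reduction to Proposition \ref{PHFU} airtight.
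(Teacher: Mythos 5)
Your proposal is correct and follows essentially the same route as the paper: take the dense open $V\subseteq U$ determined by a countable set $M$, pass to $L^M$ where countable character gives the Fr\'echet--Urysohn property, apply Proposition \ref{PHFU} there, and pull the two disjoint open sets back using the fact that $M$ determines the relevant sets. The extra care you flag (openness of $V|_M$, the $U_i$ being cylinders over $M$) is exactly the bookkeeping the paper leaves implicit.
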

\begin{proof}
Suppose $x \in \overline{U} \setminus U$ for some open $U \subseteq L^\kappa$.
Take any open $V \subseteq U$ which is dense in $U$ and is determined by a countable set $M$.
We have $x|_M \in \overline{V|_M} \setminus V|_M$.

Since the space $L^M$ has a countable character, we can apply Proposition \ref{PHFU} and obtain open sets $U_1, U_2 \subseteq V$ such that $U_1|_M \cap U_2|_M = \emptyset$ and $x|_M \in (U_1|_M)' \cap (U_2|_M)'$. It follows that $U_1 \cap U_2 = \emptyset$ and $x \in U_1' \cap U_2'$.
\end{proof}

\begin{proposition}
Every dense subspace of a space with ODT has ODT.
\end{proposition}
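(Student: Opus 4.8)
The plan is to reduce open disjoint tightness (ODT) of a dense subspace $Y \subseteq X$ to ODT of the ambient space $X$ by transferring the relevant data back and forth across the inclusion. So I would start from an open set $W$ in $Y$ and a point $y$ that is an accumulation point of $W$ in $Y$, and write $W = U \cap Y$ for some $U$ open in $X$. The scheme is: first locate a point of $X$ to which the ODT of $X$ can be applied, then apply it, and finally push the two disjoint open sets it produces back down into $Y$.

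First I would check that $y$ is an accumulation point of $U$ in $X$. This is routine: if $P$ is any neighbourhood of $y$ in $X$, then $P \cap Y$ is a neighbourhood of $y$ in $Y$ and hence meets $W \setminus \{y\} \subseteq U \setminus \{y\}$, and the witnessing point lies in $P \cap (U \setminus \{y\})$. Next I would apply ODT of $X$ to the open set $U$ and the point $y$, obtaining disjoint open $U_1, U_2 \subseteq U$ in $X$ such that $y$ is an accumulation point of each $U_i$ in $X$. Setting $W_i = U_i \cap Y$, the sets $W_1, W_2$ are open in $Y$, are contained in $W$, and satisfy $W_1 \cap W_2 = U_1 \cap U_2 \cap Y = \emptyset$. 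It then remains only to verify that $y$ is an accumulation point of each $W_i$ in $Y$, and this transfer-back step is the heart of the argument.

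For the transfer back I would fix $i$ and an arbitrary neighbourhood $O = P \cap Y$ of $y$ in $Y$, with $P$ open in $X$, and set $G = P \cap U_i$. Since $y$ accumulates to $U_i$ in $X$ and $P$ is a neighbourhood of $y$, the set $G$ is open in $X$ and nonempty; I then want a point of $(G \cap Y) \setminus \{y\} = (O \cap W_i) \setminus \{y\}$. The only delicate issue is the possible presence of the point $y$ itself in $G \cap Y$, so I would split into two cases. If $y \notin U_i$, then $y \notin G$, and density of $Y$ supplies a point of $G \cap Y$, which is automatically distinct from $y$. If $y \in U_i$, then $G \cap Y$ is itself a neighbourhood of $y$ in $Y$; but the hypothesis $y \in W'$ (in $Y$) says precisely that every neighbourhood of $y$ in $Y$ meets $W \setminus \{y\}$, i.e.\ that $y$ is non-isolated in $Y$, so $G \cap Y$ contains a point other than $y$. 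In either case $O \cap (W_i \setminus \{y\}) \neq \emptyset$, so $y$ accumulates to $W_i$ in $Y$, as required.

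I expect the main obstacle to be exactly this last step, and more precisely the bookkeeping around the single point $y$. An unrestricted assertion ``accumulation in $X$ implies accumulation in $Y$'' can genuinely fail at a point lying inside the open set when no separation axioms are assumed. What rescues the argument without any such hypothesis is twofold: the disjointness $U_1 \cap U_2 = \emptyset$ forces $y$ to lie outside at least one of $U_1, U_2$, handling that factor via plain density, while the standing hypothesis $y \in W'$ already encodes that $y$ is non-isolated in $Y$, which neutralises the remaining case. Consequently the proposition should hold for arbitrary spaces with ODT, with no regularity or Hausdorffness needed.
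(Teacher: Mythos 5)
Your proof is correct, and it is exactly the routine argument the paper leaves implicit (the proposition is stated there without proof): pull the open set up to the ambient space, apply ODT there, and intersect back with the dense subspace. Your careful case split around the point $y$ (using disjointness/density when $y \notin U_i$ and non-isolatedness of $y$ when $y \in U_i$) is the right way to justify the transfer-back step without any separation axioms, so there is nothing to add.
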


The following observation easily follows from Proposition \ref{P_pre_submax}.

\begin{proposition}
For submaximal spaces disjoint tightness and ODT are equivalent.
\end{proposition}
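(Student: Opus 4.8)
The plan is to obtain both implications directly from the submaximality criterion in Proposition~\ref{P_pre_submax}(3), which states that whenever $x \in A'$ there is an open set $U \subseteq A$ with $x \in U'$. The point is that ODT and disjoint tightness differ only in two respects: ODT restricts attention to \emph{open} sets $A = U$ and insists that the two witnessing subsets be \emph{open}, whereas disjoint tightness allows an arbitrary $A$ and arbitrary witnesses. In a submaximal space, criterion (3) lets us pass freely between ``arbitrary set'' and ``open subset'' without losing the accumulation point $x$, and this is exactly what collapses the two notions into one another.

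For the implication ODT $\to$ disjoint tightness, I would start from an arbitrary $A \subseteq X$ and a point $x \in A'$. Applying Proposition~\ref{P_pre_submax}(3) I obtain an open set $U \subseteq A$ with $x \in U'$; then ODT applied to $U$ yields disjoint open sets $U_1, U_2 \subseteq U \subseteq A$ with $x \in U_1' \cap U_2'$. Taking $A_1 = U_1$ and $A_2 = U_2$ delivers the required disjoint tightness, since these are disjoint subsets of $A$ accumulating at $x$.

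For the converse, disjoint tightness $\to$ ODT, I would take an open set $U$ and a point $x \in U'$, and apply disjoint tightness to the set $U$ itself, obtaining disjoint $A_1, A_2 \subseteq U$ with $x \in A_1' \cap A_2'$. These need not be open, so I refine each of them once more via Proposition~\ref{P_pre_submax}(3): choose open $V_i \subseteq A_i$ with $x \in V_i'$ for $i = 1, 2$. Then $V_1, V_2 \subseteq U$ are open and $x \in V_1' \cap V_2'$, which is precisely ODT for $U$ and $x$.

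The only step requiring a moment's care — and really the sole, quite mild, ``obstacle'' — is that the refinement in the second direction preserves disjointness: because $V_i \subseteq A_i$ and $A_1 \cap A_2 = \emptyset$, we automatically get $V_1 \cap V_2 = \emptyset$. Submaximality (in the form of criterion (3)) is essential in both directions, and the argument is otherwise entirely symmetric.
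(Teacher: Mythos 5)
Your proof is correct and follows exactly the route the paper intends: the paper states only that the proposition ``easily follows from Proposition~\ref{P_pre_submax}'', and your two applications of criterion (3) — once to replace an arbitrary $A$ by an open $U\subseteq A$, and once to shrink the disjoint witnesses $A_1,A_2$ to open $V_1,V_2$ — are precisely that argument spelled out.
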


Combining the last three statements, we obtain

\begin{theorem}
\label{T_BH}
Suppose $L$ is a Hausdorff space of countable character, the space $L^\omega$ is c.c.c. and $X$ is a submaximal dense subspace of $L^\kappa$.
Then $X$ has disjoint tightness.
\end{theorem}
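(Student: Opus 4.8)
The plan is to obtain the statement by chaining the three results that immediately precede it, which is precisely the force of the phrase ``Combining the last three statements''. First I would apply the Lemma guaranteeing that, whenever $L$ is Hausdorff, $L^\omega$ has countable character and $L^\omega$ is c.c.c., the cube $L^\kappa$ has ODT. The hypotheses of the theorem are exactly what this Lemma consumes: countable character of $L$ (which yields countable character of $L^\omega$), the c.c.c.\ assumption on $L^\omega$, and Hausdorffness. This first step supplies ODT for the ambient space $L^\kappa$.

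Second, I would transfer ODT from $L^\kappa$ to the subspace $X$. Since $X$ is assumed to be a dense subspace of $L^\kappa$, the Proposition stating that every dense subspace of a space with ODT again has ODT applies without change, so $X$ has ODT.

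Third, I would convert ODT into disjoint tightness. Here the submaximality of $X$ is used through the Proposition asserting that, for submaximal spaces, disjoint tightness and ODT are equivalent. Feeding $X$ into this equivalence turns the ODT established in the previous step into disjoint tightness, completing the argument.

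At the level of Theorem \ref{T_BH} itself there is essentially no obstacle: the proof is a short concatenation of already-proved facts, and each hypothesis is present to license exactly one link in the chain. The substantive work has been pushed back into the Lemma, where the c.c.c.\ hypothesis is what lets one replace an arbitrary open $U \subseteq L^\kappa$ by a dense open $V \subseteq U$ determined by a countable coordinate set $M$, after which the countable character of $L^M$ (hence its Frechet--Urysohn, and thus ODT, behaviour via Proposition \ref{PHFU}) produces two disjoint witnessing open sets, and restriction to $M$ carries both the disjointness and the accumulation back up to $L^\kappa$. If I were to anticipate trouble anywhere it would be in checking that this restriction step preserves simultaneously the disjointness $U_1 \cap U_2 = \emptyset$ and the relation $x \in U_1' \cap U_2'$ — but in the present theorem that verification is already packaged inside the cited Lemma.
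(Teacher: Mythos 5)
Your proof is correct and is exactly the paper's argument: the paper's entire proof of Theorem \ref{T_BH} is the phrase ``Combining the last three statements, we obtain,'' i.e.\ precisely the chain Lemma (ODT for $L^\kappa$) $\to$ dense-hereditariness of ODT $\to$ equivalence of ODT and disjoint tightness under submaximality. No discrepancies to report.
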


Recall that for every $T_1$ space $L$ such that $|L|>1$ and $d(L) \leq \kappa \geq \omega$ there is a dense submaximal subspace $X$ in $L^{2^\kappa}$ such that $|X|=\Delta(X)=\kappa$ \cite[Theorem 3.3]{Lipin}.

\begin{corollary}
For every $\kappa \geq \omega$ there is a Tychonoff submaximal (hence irresolvable) space $X$ such that $|X|=\Delta(X)=\kappa$ and $X$ has disjoint tightness.
\end{corollary}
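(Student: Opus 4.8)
The plan is to combine the construction quoted just above from \cite[Theorem 3.3]{Lipin} with Theorem \ref{T_BH}, so that the entire argument reduces to choosing a single space $L$ that simultaneously meets the hypotheses of both results. First I would take $L$ to be the two-point discrete space $2 = \{0,1\}$. This $L$ is finite, hence compact Hausdorff and therefore Tychonoff, and it has countable (indeed finite) character, since every singleton is open. Moreover $L^\omega$ is the Cantor cube $2^\omega$, which is second countable and thus separable, so it is c.c.c.; and $d(L) = 2 \le \omega \le \kappa$. Thus this one choice of $L$ satisfies every requirement invoked below.

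Next I would apply the recalled \cite[Theorem 3.3]{Lipin}: since $L$ is $T_1$ with $|L|>1$ and $d(L) \le \kappa$, there is a dense submaximal subspace $X$ of $L^{2^\kappa} = 2^{2^\kappa}$ such that $|X| = \Delta(X) = \kappa$. Because $2^{2^\kappa}$ is a Cantor cube it is Tychonoff, and hence so is its subspace $X$.

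Finally I would invoke Theorem \ref{T_BH}, reading the free cardinal denoted $\kappa$ there as the exponent $2^\kappa$: the hypotheses ``$L$ is Hausdorff of countable character'' and ``$L^\omega$ is c.c.c.'' were verified in the first step and do not depend on the exponent, while $X$ is by construction a submaximal dense subspace of $L^{2^\kappa}$. Therefore $X$ has disjoint tightness. It remains only to observe that $X$ is crowded: since $\Delta(X) = \kappa \ge \omega$, every nonempty open subset is infinite, so $X$ has no isolated points; being crowded and submaximal, $X$ is irresolvable, as already noted in the paper.

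I do not expect a genuine obstacle here, since the mathematical content lives entirely in Theorem \ref{T_BH} and in the cited construction. The only point that demands care is confirming that one fixed $L$ (the doubleton) satisfies at once the density bound required by \cite[Theorem 3.3]{Lipin} and the Hausdorff / countable-character / c.c.c.\ package required by Theorem \ref{T_BH}, and that the exponent $2^\kappa$ produced by the former is an admissible value of the free cardinal appearing in the latter.
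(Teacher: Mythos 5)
Your proposal is correct and is essentially the argument the paper intends: the corollary is stated immediately after recalling \cite[Theorem 3.3]{Lipin} and Theorem \ref{T_BH}, and the implicit proof is exactly your combination of the two with a suitable $L$ (the doubleton works, as you verify). Your extra care about the exponent $2^\kappa$ and the crowdedness of $X$ is sound but routine.
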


\begin{remark}
It follows from c.c.c. and Proposition \ref{P_pre_submax} that any space $X$ from Theorem \ref{T_BH} also has the following property: whenever $\bigsqcup\limits_{\alpha<\omega_1} A_\alpha \subseteq X$, there is $\alpha$ such that $A_\alpha$ is closed discrete.
\end{remark}

\section{A few notes on open disjoint tightness}

An example of a countable regular space that has ODT and does not have disjoint tightness can be found in \cite[Example 1.8]{BH}.

\begin{question}
\label{Q_odt_1}
Is there a space that has disjoint tightness and does not have ODT?
\end{question}

Let us show that some known spaces do not have ODT.

Recall that a space $X$ is called {\it maximal}, if it is crowded and its topology has no crowded proper refinement \cite{Hewitt}.
A space $X$ is called {\it perfectly disconnected} if no point of $X$ is a limit point of two disjoint subsets of $X$ \cite{Douwen}.
It is known that a crowded space $X$ is maximal iff it is perfectly disconnected \cite{Douwen}.
As a corollary, no maximal space has ODT.

\begin{example}
A Hausdorff compact space without ODT.
\end{example}
\begin{proof}
Take any Tychonoff maximal space $X$ (an example of such space was constructed in \cite{Elkin}).
Denote by $K$ any compactification of $X$.
Since ODT is dense-hereditary and $X$ does not have ODT, then the space $K$ does not have ODT as well.
\end{proof}


\medskip

\noindent {\bf Acknowledgements.}
The author is grateful to Maria~A. Filatova for constant attention to this work and to Alexander~V. Osipov for useful remarks.

\bibliographystyle{model1a-num-names}
\bibliography{<your-bib-database>}

\end{document}